\documentclass[11pt]{amsart}
\usepackage[colorlinks, citecolor=blue, dvipdfm,
pagebackref]{hyperref}

\usepackage{graphicx}
\usepackage[all]{xy}
\usepackage{extarrows}
\usepackage{tikz}
\usepackage{extarrows}
\usepackage{xypic}
\usepackage{exscale}
\usepackage{relsize}

\newtheorem{theorem}{Theorem}[section]
\newtheorem{lemma}[theorem]{Lemma}

\theoremstyle{definition}
\newtheorem{assertion}[theorem]{Assertion}
\newtheorem{definition}[theorem]{Definition}
\newtheorem{question}[theorem]{Question}
\newtheorem{example}[theorem]{Example}

\newtheorem{proposition}[theorem]{Proposition}
\newtheorem{corollary}[theorem]{Corollary}
\newtheorem{remark}[theorem]{Remark}

\theoremstyle{remark}

\newcommand{\be}{\begin{equation}}
\newcommand{\ee}{\end{equation}}

\newcommand{\Z}{\mathbb{Z}}

\numberwithin{equation}{section}

\begin{document}
\title[Topological obstructions and Calabi-Yau manifolds]{Topological obstructions to geometric positivity and negativity on Calabi-Yau manifolds}
\author{Ping Li}
%    Address of record for the research reported here
\address{School of Mathematical Sciences, Fudan University, Shanghai 200433, China}

\email{pinglimath@fudan.edu.cn\\
pinglimath@gmail.com}
%    \thanks will become a 1st page footnote.

\thanks{The author was partially supported by the National
Natural Science Foundation of China (Grant No. 12371066).}

%    General info
\subjclass[2010]{53C55, 14J32, 14J45, 57R20, 32Q45, 53D20.}

%\date{January 1, 2001 and, in revised form, June 22, 2001.}

%\dedicatory{Dedicated to Professor Martin Guest on the occasion of his 70th birthday.}
\keywords{Calabi-Yau manifold, Fano manifold, canonically polarized manifold, weak Fano manifold, holomorphic sectional curvature, circle action, K\"{a}hler hyperbolic manifold, $\hat{A}$-genus, Todd genus.}

\begin{abstract}
We study whether the topology underlying a Calabi-Yau manifold can support natural geometric positivity or negativity structures. In even complex dimension $n \geq 4$ (assuming $b_2=1$ when $n \geq 6$), we strengthen a theorem of Oguiso-Peternell by proving that a Calabi-Yau manifold is not homeomorphic to a weak Fano $n$-fold. The same obstruction applies to K\"{a}hler manifolds with quasi-positive holomorphic sectional curvature. A transformation-group analogue excludes, in particular, symplectic manifolds admitting Hamiltonian circle actions with isolated fixed points. On the negative side, we show that the fundamental group of a Calabi-Yau manifold is not isomorphic to that of a K\"{a}hler hyperbolic manifold. Taken together, these results exhibit a common topological rigidity separating Calabi-Yau manifolds from several fundamental classes governed by geometric positivity or negativity.
\end{abstract}

\maketitle

\section{Introduction}\label{section-introduction}
Throughout this paper, all smooth and complex manifolds are assumed to be closed,
connected, and oriented. Every complex manifold is equipped with the canonical
orientation induced by its complex structure. Unless otherwise stated, all complex
manifolds have complex dimension $n$.

By a \emph{Calabi-Yau} manifold $M_0$, we mean a K\"{a}hler manifold whose first Chern class satisfies $$c_1(M_0)=0\in H^{1,1}(M_0;\mathbb{R}).$$
A projective manifold $M_+$ (resp. $M_-$) is called \emph{Fano} (resp. \emph{canonically polarized}) if $c_1(M_+)>0$ (resp. $c_1(M_-)<0$); equivalently, the anti-canonical bundle $-K_{M_+}$ (resp. canonical bundle $K_{M_-}$) is ample. By the Calabi-Yau theorem (\cite{Yau}), the Calabi-Yau, Fano and canonically polarized conditions are characterized by the existence of K\"{a}hler metrics with zero, positive and negative Ricci curvature respectively. In the canonically polarized case, the metric can moreover  be chosen to be K\"{a}hler-Einstein by the Aubin-Yau theorem (\cite{Aub,Yau}). Throughout the paper, the symbols $M_0$, $M_+$ and $M_-$ are \emph{reserved} for manifolds of these three types.

When $n=1$, the underlying homeomorphism types of $M_0$, $M_+$ and $M_-$ are pairwise distinct. When $n=2$, neither $M_+$ nor $M_-$ can be homeomorphic to $M_0$ (see Prop. \ref{positivesurface} and \ref{negativesurface}). By contrast, the smooth four-manifold underlying the blow-up of $\mathbb{C}P^2$ at eight points in general position, $$\mathbb{C}P^2\sharp8\overline{\mathbb{C}P^2},$$
admits both a del Pezzo Fano structure, denoted by $S_+$, and a canonically polarized structure, denoted by $S_-$. The former is classical, whereas the latter was observed by Catanese and LeBrun in \cite{CL} (see also \cite{Ko}). Consequently, for every positive integer $k$, the \emph{even}-dimensional Fano manifold $(S_+)^k$ is diffeomorphic to the canonically polarized manifold $(S_-)^k$. Moreover, these manifolds admit K\"{a}hler-Einstein metrics whose Ricci curvatures have opposite signs (\cite[Thm 1]{CL}). The positive case is due to Tian-Yau's work (\cite{TY87}), while the negative case follows from the Aubin-Yau theorem. More complex even-dimensional examples can be found in \cite{RS}. To the best of the author's knowledge, no analogous examples are known in \emph{odd} complex dimensions.

This naturally raises the following question in dimensions $n\geq3$: can a Calabi-Yau manifold be homeomorphic to a Fano or canonically polarized manifold?
Even in complex dimension three, it remains unknown whether a smooth six-dimensional manifold can support both Fano and Calabi-Yau complex structures. For example, Nakamura (\cite{Na}) and Campana-Peternell (\cite{CP}) asked  whether there exists a Calabi-Yau threefold that is orientation-preservingly homeomorphic to either a smooth cubic threefold in $\mathbb{C}P^4$, or a smooth complete intersection of two quadrics in $\mathbb{C}P^5$. This question remains open and was highlighted again in \cite[p.138]{OP}.

In view of the absence of existence results, the Calabi-Yau case appears to be more rigid. Indeed strong obstructions show that, in many cases, a Calabi-Yau manifold cannot be orientation-preservingly homeomorphic to a Fano manifold. For instance, Oguiso-Peternell proved the following result (\cite[Thm 1.4]{OP}).
\begin{theorem}[Oguiso-Peternell]\label{thm-Oguiso-Peternell}
Let $M_0$ be a Calabi-Yau manifold of even complex dimension $n\geq4$. If $n\geq6$, assume in addition that the second Betti number $b_2(M_0)=1$. Then
$M_0$ is not orientation-preservingly homeomorphic to a Fano manifold.
\end{theorem}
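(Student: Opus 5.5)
The plan is to compare the $\hat{A}$-genus on the two sides. Suppose $h\colon M_0\to M_+$ is an orientation-preserving homeomorphism. By Novikov's theorem the rational Pontryagin classes are homeomorphism invariants, so $h$ preserves every rational Pontryagin number, and in particular $\hat{A}(M_0)=\hat{A}(M_+)$. Stiefel--Whitney classes are homotopy invariants, so $w_2(M_+)=h^{*}$-corresponds to $w_2(M_0)$. Since $c_1(M_0)=0$, we get $w_2(M_0)=0$, hence $w_2(M_+)=0$. Likewise $b_1(M_0)=b_1(M_+)=0$, because a Fano manifold is simply connected. The proof then has two halves: show $\hat{A}(M_+)=0$ and $\hat{A}(M_0)\neq0$.

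\emph{The Fano side.}
\begin{itemize}
\item Since $w_2(M_+)=0$, the class $c_1(M_+)$ is even, so a square root $K^{1/2}$ of the canonical bundle exists.
\item Riemann--Roch identifies $\hat{A}(M_+)$ with $\chi(M_+,K^{1/2})$.
\item The line bundle $K^{1/2}=K\otimes(-K/2)$ with $-K/2$ ample, so Kodaira vanishing gives $H^{q}(M_+,K^{1/2})=0$ for $q>0$.
\item Also $H^0(M_+,K^{1/2})=0$, since $K^{1/2}$ is anti-ample.
\end{itemize}
Hence $\hat{A}(M_+)=0$.

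\emph{The Calabi--Yau side.}
\begin{itemize}
\item Since $b_1(M_0)=0$, the Beauville--Bogomolov decomposition shows that $\pi_1(M_0)$ is finite, of order $d$.
\item The universal cover $\widetilde{M_0}$ splits as a product of strict Calabi--Yau manifolds (holonomy $SU(m)$) and irreducible hyperk\"ahler manifolds, with no torus factor.
\item Since $c_1=0$, $\hat{A}$ equals the Todd genus $\chi(\mathcal{O})=\sum_p(-1)^p h^{0,p}$. This gives $1+(-1)^m$ for a strict Calabi--Yau factor of dimension $m$, and $k+1$ for a hyperk\"ahler factor of dimension $2k$.
\item $\hat{A}$ is multiplicative under products and under finite covers, so
$$d\cdot\hat{A}(M_0)=\hat{A}(\widetilde{M_0})=\prod_i \hat{A}(\text{factor}_i).$$
\end{itemize}
So $\hat{A}(M_0)\neq0$ exactly when no strict Calabi--Yau factor has odd dimension.

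For $n=4$ this is automatic. The factor dimensions sum to $4$, there is no torus, and strict Calabi--Yau or hyperk\"ahler factors have dimension at least $2$. So the only possibilities are a single irreducible $4$-fold or a product of two surfaces (K3 surfaces). Both have nonzero $\hat{A}$, which contradicts $\hat{A}(M_+)=0$.

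For $n\geq6$ the danger is products such as two Calabi--Yau threefolds, and the hypothesis $b_2(M_0)=1$ is meant to exclude them. The claim I need is that $b_2(M_0)=1$ forces $\widetilde{M_0}$ to be irreducible, in which case $\hat{A}(M_0)=2/d$ or $(n/2+1)/d$, both nonzero.

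The main obstacle is exactly this irreducibility claim. The deck group acts on $H^2(\widetilde{M_0};\mathbb{R})$ by possibly permuting isomorphic factors, and $H^2(M_0;\mathbb{R})$ is the invariant part. I would first try the following argument.
\begin{itemize}
\item Each factor $X_j$ carries its own Kähler class pulled back from $X_j$.
\item For the invariant subspace to be one-dimensional, the group must act transitively on the factors.
\item In that case one examines whether a free action permuting the factors cyclically forces an extra invariant class in $H^2$, or else produces a fixed point via the Lefschetz fixed point theorem applied to the $\hat{A}$/holomorphic Euler characteristic.
\end{itemize}
If that fails, a fallback is to note that the odd-dimensional strict Calabi--Yau factors must come in even number. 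One would then try to show directly that the invariant $h^{1,1}$ of such a free quotient is at least $2$.
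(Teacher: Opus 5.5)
\textbf{There is a genuine gap.} Your two halves are the right ones and match the paper's argument; the paper obtains this statement as the Fano case of Theorem \ref{first thm}. The Fano half is $\hat{A}(M_+)=\chi(M_+,K^{1/2})=0$ by Riemann--Roch plus vanishing. The Calabi--Yau half is $\hat{A}(M_0)\neq0$ via Beauville--Bogomolov and Todd genera, with Novikov connecting the two.

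The missing idea is that the homeomorphism gives $\pi_1(M_0)\cong\pi_1(M_+)=1$. You extract only $b_1(M_0)=0$, and three of your steps fail without simple connectivity.

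(i) Here $c_1(M_0)=0$ holds only in real cohomology, so $w_2(M_0)=0$ needs $H^2(M_0;\mathbb{Z})$ to have no torsion. An Enriques surface shows this can fail: its $c_1$ is a nonzero torsion class and $w_2\neq0$.

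(ii) $b_1(M_0)=0$ does not make $\pi_1(M_0)$ finite. Let $A$ be an abelian surface and $S$ a K3 surface with an Enriques involution $\iota$. Then $(A\times S)/\langle(-1,\iota)\rangle$ is a Calabi--Yau fourfold with $b_1=0$, infinite $\pi_1$ and $\hat{A}=0$. So even your $n=4$ case is not established as written.

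(iii) The irreducibility claim you single out as the main obstacle is false once $\pi_1(M_0)\neq1$. Take a Calabi--Yau threefold $Y$ with $b_2(Y)=1$ and a free holomorphic involution $\tau$. For example, take a generic $\tau$-invariant complete intersection of four quadrics in $\mathbb{P}^7$, where $\tau$ changes the sign of four homogeneous coordinates.
\begin{itemize}
\item The map $\sigma(y_1,y_2)=(\tau y_2,y_1)$ generates a free $\mathbb{Z}/4$-action on $Y\times Y$. Indeed $\sigma^2=\tau\times\tau$, and a fixed point of $\sigma^{\pm1}$ would force $y_1=\tau y_1$.
\item Its only invariant classes in $H^2(Y\times Y;\mathbb{R})=p_1^*H^2(Y)\oplus p_2^*H^2(Y)$ are multiples of $p_1^*\omega+p_2^*\omega$, where $\omega$ is a K\"ahler class on $Y$.
\item The quotient is therefore a Calabi--Yau sixfold with $b_2=1$ and $\hat{A}=\hat{A}(Y)^2/4=0$.
\end{itemize}
So neither your Lefschetz argument nor your invariant-$h^{1,1}$ fallback can succeed. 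The hypothesis $b_2=1$ only bites together with $\pi_1(M_0)=1$.

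Once you use $\pi_1(M_0)=1$, all of this becomes immediate, and this is exactly what the paper does (Assertion \ref{second assertion}, Lemma \ref{BB lemma}, Assertion \ref{fourth assertion}).
\begin{itemize}
\item $H^2(M_0;\mathbb{Z})$ is torsion-free, so $c_1(M_0)=0$ integrally and $w_2(M_0)=0$.
\item The Beauville--Bogomolov cover is $M_0$ itself: a product of SCY and hyperK\"ahler factors with no torus, so $d=1$. Your dimension count for $n=4$ then goes through.
\item For $n\geq6$, by K\"unneth each K\"ahler factor contributes at least $1$ to $b_2$. So $b_2(M_0)=1$ forces a single factor. It is even-dimensional, with $\hat{A}(M_0)=2$ or $n/2+1$, contradicting $\hat{A}(M_0)=\hat{A}(M_+)=0$.
\end{itemize}
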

Thus, in many even complex dimensions, and especially when $b_2=1$, the topology of a Calabi-Yau manifold distinguishes it from that of a Fano manifold.
For the comparison between Calabi-Yau and canonically polarized manifolds, however, neither existence nor nonexistence results seem previously to have been known.

As noted above, Fano and canonically polarized manifolds can be characterized by the existence of K\"{a}hler metrics with positive and negative Ricci curvature respectively. This suggests broadening the preceding homeomorphism problem beyond these two classes and asking whether the underlying topological type of a Calabi-Yau manifold can support other natural geometric structures exhibiting positivity or negativity. This leads to the central question of the paper.
\begin{question}\label{Question}
Can the homeomorphism or diffeomorphism type underlying a Calabi-Yau manifold support geometric structures satisfying natural positivity or negativity conditions?
\end{question}

The purpose of this paper is to give several negative answers to Question \ref{Question}. We show that, in several important settings, the topology underlying a Calabi-Yau manifold cannot support such structures. Theorem \ref{first thm} strengthens Theorem \ref{thm-Oguiso-Peternell} in two directions: the Fano condition is weakened to weak Fano, and the homeomorphism is not required to preserve orientation. Theorem \ref{second thm} gives a parallel  obstruction for K\"{a}hler metrics with quasi-positive holomorphic sectional curvature. Theorem \ref{third refined thm} provides a transformation-group analogue; in particular, Corollary \ref{third thm} excludes symplectic manifolds admitting Hamiltonian circle actions with isolated fixed points. Theorem \ref{fifth thm} shows that the fundamental group of a Calabi-Yau manifold cannot be isomorphic to that of a K\"{a}hler hyperbolic manifold.

\subsection*{Organization of the paper}
Section \ref{section-main results} states the main results and provides the relevant background. Section \ref{section-preliminary} reviews preliminaries on the $\hat{A}$-genus, the Todd genus, and the Bogomolov-Beauville decomposition. Sections \ref{proof of 1thm}-\ref{proof of 5thm} contain the proofs of the main theorems.

Since scalar curvature is the weakest of the classical curvature conditions considered here, Section \ref{scalar curvature} discusses which signs of scalar curvature can occur for Riemannian metrics on Calabi-Yau manifolds. Finally, Section \ref{appendix} is an appendix proving that neither a Fano surface nor a canonically polarized surface can be homotopy equivalent to a Calabi-Yau surface. The arguments in Sections \ref{scalar curvature} and \ref{appendix} are standard and are included for completeness and the reader's convenience.

\section{Main results}\label{section-main results}
Recall that a K\"{a}hler manifold $M$ is called \emph{weak Fano} if $c_1(M)$ is nef and big. Our first result strengthens \cite[Thm 1.4]{OP} in two directions: it replaces the Fano condition by the weaker weak Fano condition and removes the requirement that the homeomorphism preserve orientation.
\begin{theorem}\label{first thm}
Let $M_0$ be a Calabi-Yau manifold of even dimension $n\geq4$. If $n\geq6$, assume in addition that $b_2(M_0)=1$. Then $M_0$ is not homeomorphic to a weak Fano manifold.
\end{theorem}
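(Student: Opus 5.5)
We argue by contradiction. Suppose $f\colon M_0\to M$ is a homeomorphism onto a weak Fano manifold $M$ of even dimension $n=2m\geq4$. The idea is to compare a topological invariant on the two sides. On $M_0$ the $\hat{A}$-genus, which is a rational combination of Pontryagin numbers, is controlled by the Bogomolov--Beauville decomposition. On $M$ the Todd genus equals $1$ by Kawamata--Viehweg vanishing, since $-K_M$ is nef and big.

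\emph{Transporting the data.} Rational Pontryagin classes are topological invariants (Novikov), and $f$ sends $p_i(M_0)$ to $p_i(M)$. If $f$ preserves orientation, all Pontryagin numbers agree. If it reverses orientation, each Pontryagin number of degree $4k=2n$ changes sign, so $\hat{A}(M)=\pm\hat{A}(M_0)$. Also $b_1$, $b_2$ and $\pi_1$ coincide, since they are homotopy invariants. Because $M$ is weak Fano, Kawamata--Viehweg gives $H^i(M,\mathcal{O})=0$ for $i>0$, so $\chi(\mathcal{O}_M)=\mathrm{Td}(M)=1$, $b_1(M)=0$ and $h^{2,0}(M)=0$. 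Moreover $M$ is rationally connected by Zhang's theorem, hence simply connected. It follows that $M_0$ is simply connected with $b_1=0$. By Bogomolov--Beauville, $M_0$ is then a product of strict Calabi--Yau factors $Y_j$ and hyperkähler factors $X_l$, with no torus factor.

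\emph{Comparing genera.} Since $c_1(M_0)=0$, we have $\hat{A}(M_0)=\mathrm{Td}(M_0)=\prod\chi(\mathcal{O}_{Y_j})\prod\chi(\mathcal{O}_{X_l})$. Here $\chi(\mathcal{O}_{X})=m'+1$ for a hyperkähler factor of dimension $2m'$, and $\chi(\mathcal{O}_Y)=1+(-1)^{\dim Y}$ for a strict Calabi--Yau factor. On the $M$ side we need $\hat{A}(M)$, which is not the Todd genus because $c_1(M)\neq0$. So I would use the twisted form $\mathrm{Td}(M)=\int e^{c_1/2}\hat{A}(M)$, i.e. $\hat{A}(M)=\chi(M,K_M^{1/2})$ when a square root exists, or more generally a Hirzebruch-type vanishing. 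Since $b_2(M_0)=b_2(M)$, the hyperkähler and Calabi--Yau structure of $M_0$ forces $H^2(M;\mathbb{Z})$ to look like that of $M_0$. When $b_2=1$, $c_1(M)$ is a positive multiple of the generator $x$, and its pullback to $M_0$ is a class $\lambda f^*x$.

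\emph{Case $b_2=1$.} Then $M_0$ is a single irreducible factor. It is hyperkähler with $\mathrm{Td}=m+1$, or a strict Calabi--Yau with $\mathrm{Td}=2$ since $n$ is even. In both cases $h^{2,0}(M_0)$ or $h^{n,0}(M_0)$ is nonzero. I would then compare $\chi_y$-type topological invariants. The signature $\sigma$ and the Euler characteristic are preserved up to the sign of $\sigma$, and the Hodge-theoretic identity $\sigma=\sum(-1)^q h^{p,q}$ pins down contributions from $h^{p,0}$. The cleanest route is the $\hat{A}$-vanishing: write $c_1(M)=r\,x$ with $r>0$ and $x$ primitive. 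The function $t\mapsto\chi(M,\mathcal{O}(tx))$ is a polynomial determined by Pontryagin data and $x^n$, and it vanishes at $-1,\dots,-(r-1)$ by Kawamata--Viehweg together with Serre duality. Pulling this polynomial back to $M_0$, where it equals the Riemann--Roch polynomial of a line bundle $L$ with $c_1(L)=\pm f^*x$, lets us compare with the known form of Riemann--Roch on $M_0$. For hyperkähler $M_0$, Huybrechts' result gives a polynomial in $q(L)$ with known nonzero values at $t=0$. For strict Calabi--Yau $M_0$, Serre duality makes the polynomial even or odd in $t$. The sign ambiguity from orientation is absorbed because $n$ is even: $x^n$ changes sign, but the comparison of $\chi(\mathcal{O})=1$ against $2$ or $m+1$ still fails.

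\emph{Case $n=4$ with $b_2$ arbitrary.} The factors are K3$\times$K3, a hyperkähler fourfold, or a strict Calabi--Yau fourfold, with $\mathrm{Td}(M_0)\in\{4,3,2\}$. Meanwhile $\mathrm{Td}(M)=1$ and $\hat{A}(M)$ can be computed from $\chi(\mathcal{O}_M)=1$ together with Noether-type relations $\mathrm{Td}=\frac{1}{720}(-c_4+c_3c_1+3c_2^2+4c_2c_1^2-c_1^4)$. One must check that no choice of $c_1^4\geq0$ (positive, since $-K_M$ is big) and $c_1^2c_2$ can reconcile these with the shared Pontryagin numbers $p_1^2$, $p_2$ and Euler characteristic $c_4$.

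\emph{Main obstacle.} I expect the hardest step to be eliminating the orientation-reversing case and expressing $\hat{A}(M)$ purely topologically despite $c_1(M)\neq0$. I would try first to show $\hat{A}(M)=0$ whenever $M$ is spin (for instance via a Kawamata--Viehweg vanishing for $K_M^{1/2}$, since $-K_M^{1/2}$ is nef and big). This would contradict $\hat{A}(M_0)=\pm\mathrm{Td}(M_0)\neq0$, because $M_0$ is spin ($c_1=0$) and $M$ is spin by $w_2$-invariance. The vanishing holds because $\chi(K^{1/2})=\pm\chi(K^{1/2})$ by Serre duality, and all higher cohomology of $K^{1/2}=K+(-K/2)$ vanishes while $H^0(K^{1/2})=0$.
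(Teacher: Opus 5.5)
Your decisive argument in the last paragraph---$\hat{A}(M)=\chi(M,K_M^{1/2})=0$ by Kawamata--Viehweg vanishing plus $H^0(M,K_M^{1/2})=0$, set against $\hat{A}(M_0)=\mathrm{Td}(M_0)\in\{2,3,4\}$ for $n=4$ (resp.\ $\in\{2,m+1\}$ for $b_2=1$) read off from the Bogomolov--Beauville decomposition, with Novikov's theorem absorbing the orientation sign---is essentially the paper's proof, which runs the same comparison contrapositively by deducing $\hat{A}(M_0)=0$ and invoking Lemma~\ref{BB lemma} to force an odd-dimensional SCY factor. You should drop the Riemann--Roch-polynomial and Noether-formula detours (as written, comparing $\chi(\mathcal{O}_M)=1$ with $\mathrm{Td}(M_0)$ proves nothing, since $\mathrm{Td}(M)$ involves $c_1(M)$, which the homeomorphism does not carry to $c_1(M_0)$); the Serre-duality remark is vacuous for even $n$; and you should state the routine points explicitly: $c_1(M_0)=0$ integrally because $H^2(M_0;\mathbb{Z})$ is torsion-free; $h^{0,1}(M)=h^{0,2}(M)=0$ gives projectivity (needed for Zhang's theorem) and $\mathrm{Pic}(M)\cong H^2(M;\mathbb{Z})$, so $w_2(M)=0$ yields a holomorphic $K_M^{1/2}$; and $H^0(M,K_M^{1/2})=0$ because $L=K_M^{-1/2}$ is nef with $L^n>0$.
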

Our proof of Theorem \ref{first thm} follows the general strategy of \cite{OP}, but requires additional
ingredients from both algebraic geometry and topology. By the Demailly-Paun bigness criterion (\cite{DP}), a K\"{a}hler manifold with quasi-positive first Chern class has $c_1(M)$ nef and big. We therefore obtain the following consequence.
\begin{corollary}\label{coro}
Let $M_0$ be a Calabi-Yau manifold of even dimension $n\geq4$. If $n\geq6$, assume in addition that $b_2(M_0)=1$. Then $M_0$ cannot be homeomorphic to a K\"{a}hler manifold whose first Chern class is quasi-positive.
\end{corollary}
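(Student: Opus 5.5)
The statement to prove is Corollary \ref{coro}, and the plan is to reduce it at once to Theorem \ref{first thm}. Let $M$ be a compact K\"{a}hler manifold, homeomorphic to $M_0$, whose first Chern class $c_1(M)$ is quasi-positive. By this we mean that $c_1(M)$ is represented by a real closed $(1,1)$-form $\rho$ with $\rho\geq0$ everywhere and $\rho>0$ at some point. It suffices to show that $M$ is weak Fano, that is, that $c_1(M)$ is nef and big. Theorem \ref{first thm} then says that no such $M$ can be homeomorphic to $M_0$ under the stated hypotheses: $n$ even and $n\geq4$, with $b_2(M_0)=1$ when $n\geq6$.

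\emph{Nefness.} The class $c_1(M)$ contains the smooth semipositive form $\rho$. For every K\"{a}hler form $\omega$ and every $\varepsilon>0$, the form $\rho+\varepsilon\omega$ is therefore a K\"{a}hler form in the class $c_1(M)+\varepsilon[\omega]$. So $c_1(M)$ lies in the closure of the K\"{a}hler cone, which is exactly the K\"{a}hler-manifold definition of nef.

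\emph{Bigness.} Since $\rho\geq0$ everywhere and $\rho>0$ on a nonempty open set $U$, the form $\rho^n$ is nonnegative and strictly positive on $U$. This gives
$$\int_M c_1(M)^n=\int_M\rho^n\geq\int_U\rho^n>0.$$
The Demailly-Paun criterion (\cite{DP}) states that a nef class $\alpha$ on a compact K\"{a}hler manifold with $\int_M\alpha^n>0$ is big, i.e. contains a K\"{a}hler current. Hence $c_1(M)$ is nef and big.

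One point needs checking: that the notion of weak Fano used in Theorem \ref{first thm} is this transcendental one on K\"{a}hler manifolds, with no projectivity assumed. The paper's definition ("a K\"{a}hler manifold $M$ is called weak Fano if $c_1(M)$ is nef and big") is indeed of this kind, so the argument applies. If one wanted projectivity anyway, it would follow: a big class is represented by a K\"{a}hler current, so $M$ is Moishezon, and a K\"{a}hler Moishezon manifold is projective. Then $-K_M$ is nef and big in the usual algebraic sense. Apart from this, every step is formal, and the only substantive input is the Demailly-Paun criterion.
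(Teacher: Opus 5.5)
Your proposal is correct and follows the paper's own argument: the paper deduces the corollary from Theorem \ref{first thm} by noting that a quasi-positive $c_1(M)$ is nef and, by the Demailly-Paun criterion, big. You additionally spell out the two steps the paper leaves implicit, namely nefness via the K\"{a}hler forms $\rho+\varepsilon\omega$ and the positivity $\int_M c_1(M)^n>0$ that is needed to apply Demailly-Paun.
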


In particular, Theorem \ref{first thm} and Corollary \ref{coro} imply that the underlying topological type of such a Calabi-Yau manifold cannot carry a K\"{a}hler structure with quasi-positive Ricci curvature.

For a K\"{a}hler metric $g$, the relationship between its Ricci curvature $\text{Ric}(g)$ and its holomorphic sectional curvature $\text{HSC}(g)$ is subtle (see \cite[p.181]{Zhe}). The sign of either $\text{Ric}(g)$ or $\text{HSC}(g)$ determines that of scalar curvature, while both of them are determined by the sign of holomorphic bisectional curvature (see \cite[\S7.5]{Zhe} for details). In general, however, the sign of $\text{Ric}(g)$ and that of $\text{HSC}(g)$ do not imply one another. In this sense, the two positivity conditions may be viewed as occupying comparable but generally incomparable positions in the hierarchy of K\"{a}hler curvature conditions.

Motivated by this parallel, our second main result gives a
holomorphic-sectional-curvature counterpart of Theorem \ref{first thm}.
\begin{theorem}\label{second thm}
Let $M_0$ be a Calabi-Yau manifold of even dimension $n\geq4$. If $n\geq6$, assume in addition
that $b_2(M_0) = 1$. Then $M_0$ is not homeomorphic to a K\"{a}hler manifold with quasi-positive holomorphic sectional curvature.
\end{theorem}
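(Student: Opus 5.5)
Our plan is to reduce Theorem \ref{second thm} to the same topological mechanism behind Theorem \ref{first thm}. The key input is the structure theory of Kähler manifolds with quasi-positive holomorphic sectional curvature. By Yang's theorem (and its quasi-positive refinements due to Matsumura, Zhang-Zhang and others), a compact Kähler manifold $M$ with quasi-positive $\mathrm{HSC}$ is projective and rationally connected. In particular $M$ is simply connected, and
\[
h^{p,0}(M)=0\quad (p\geq1),\qquad \mathrm{Td}(M)=\chi(M,\mathcal{O}_M)=1 .
\]
So suppose $M_0$ is homeomorphic to such an $M$. Because the Todd genus is not a topological invariant a priori, the strategy, as in the proof of Theorem \ref{first thm}, is to pass through the $\hat{A}$-genus.

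The first step is to compute on the Calabi-Yau side. Homeomorphic manifolds have the same rational Pontryagin classes by Novikov's theorem. They also share Stiefel-Whitney classes, so $M$ is spin because $M_0$ is: $c_1(M_0)=0$ gives $w_2(M_0)=0$, though this needs care when $M_0$ has torsion in $H^2$. Up to the sign coming from orientation, this gives $\hat{A}(M)=\pm\hat{A}(M_0)$. On $M_0$ we have $c_1=0$ rationally, so $\hat{A}(M_0)=\mathrm{Td}(M_0)=\chi(M_0,\mathcal{O})$. The Bogomolov-Beauville decomposition reviewed in Section \ref{section-preliminary} computes this. After a finite étale cover $\widetilde{M_0}=T\times\prod Y_i\times\prod X_j$ (torus, strict Calabi-Yau, hyperkähler factors), we have $\chi(\widetilde{M_0},\mathcal{O})=\deg\cdot\chi(M_0,\mathcal{O})$. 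If a torus factor is present, $b_1(M_0)>0$, which contradicts simple connectivity of $M$ since $\pi_1$ is a homotopy invariant. Otherwise the fundamental group of $M_0$ is finite, hence trivial, so $M_0$ itself splits as such a product. Its Todd genus is then a product of factors $1+(-1)^{\dim Y_i}$ and $\tfrac{1}{2}\dim X_j+1$. When $b_2=1$ (or $n=4$, handled by listing the cases) $M_0$ is a single irreducible factor, so $\chi(M_0,\mathcal{O})\in\{2,\tfrac{n}{2}+1\}$. In every case it is nonzero and differs from $\pm1$ when $n$ is even.

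The second step is to compute $\hat{A}(M)$ on the positive side. For even $n$ we have $\hat{A}(M)=\int e^{-c_1/2}\mathrm{Td}(M)$, and $w_2(M)=0$ means $c_1(M)$ is even in $H^2(M;\mathbb{Z})$ (here $H^2$ is torsion-free since $M$ is simply connected). Hence $L=-\tfrac12 c_1(M)=\tfrac12 K_M$ is a genuine holomorphic line bundle and $\hat{A}(M)=\chi(M,L)$. Two routes are available. The first invokes Lichnerowicz: HSC$>0$ alone does not give positive scalar curvature, but quasi-positive HSC gives nonnegative total scalar curvature. The second, which is cleaner, argues algebraically. Rational connectedness together with $K_M$ being a square yields $h^0(M,mK_M)=0$ and, by Serre duality and the Kodaira-type vanishing available for rationally connected manifolds, $\chi(M,\tfrac12K_M)=0$. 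If instead we follow exactly the route of Theorem \ref{first thm}, we can use that a spin rationally connected manifold of even dimension with a Kähler metric of quasi-positive HSC admits, via Hitchin/Lichnerowicz and a deformation of the metric, positive scalar curvature, forcing $\hat{A}(M)=0$. Both routes contradict $\hat{A}(M_0)\neq0$.

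The main obstacle is this vanishing $\hat{A}(M)=0$. I would first try to prove $H^i(M,\tfrac12K_M)=0$ for all $i$ directly. By Serre duality, $H^i(M,\tfrac12K_M)\cong H^{n-i}(M,\tfrac12K_M)^*$, so it suffices to treat $i\leq n/2$. For these one uses a Bochner-type argument for twisted harmonic forms with respect to the quasi-positive HSC metric, in the spirit of Yang's vanishing $h^{p,0}=0$. Alternatively, since Kähler metrics with quasi-positive HSC have positive total scalar curvature, one can try to conformally produce a positive scalar curvature metric and apply Lichnerowicz's theorem. If neither vanishing can be established, the fallback is to reuse the full argument of Theorem \ref{first thm} after showing that $c_1(M)$ is nef and big. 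This holds in the $b_2=1$ case because a rationally connected $M$ with $b_2=1$ is Fano. For $n=4$ one would need a separate argument.
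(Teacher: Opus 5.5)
There is a genuine gap. Your Calabi--Yau side is fine and is essentially the contrapositive of Lemma~\ref{BB lemma}. (One slip: for $n=4$ the manifold $M_0$ need not be irreducible, e.g.\ $K3\times K3$ with $\chi(\mathcal{O})=4$, but nonvanishing of $\hat{A}(M_0)$ still holds.)

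The gap is the vanishing $\hat{A}(M)=0$, and it comes from a false claim in your proposal. Positive holomorphic sectional curvature \emph{does} give positive scalar curvature, pointwise. This is Lemma~\ref{dominate relation}, Berger's averaging trick: integrating $H_p$ over the unit sphere $\Sigma_p\subset T^{1,0}_pM$ gives $\frac{\mathrm{Vol}(\mathbb{S}^{2n-1})}{n(n+1)}S_g(p)$. Hence quasi-positive HSC yields $S_g\geq 0$ everywhere and $S_g(p)>0$ at the distinguished point. Since $M$ is spin (it is simply connected and homeomorphic to $M_0$), Lichnerowicz's theorem for quasi-positive scalar curvature (Theorem~\ref{Lich-AH}) gives $\hat{A}(M)=0$ directly. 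This works in every even dimension, including $n=4$. Rational connectedness is used only to obtain $\pi_1(M)=0$. That is the paper's entire proof.

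None of your substitutes closes the gap.
\begin{enumerate}
\item Positive total scalar curvature is not enough to conformally reach positive scalar curvature; that requires a positive Yamabe invariant, which pointwise quasi-positivity supplies and an integral bound does not.
\item There is no ``Kodaira-type vanishing for rationally connected manifolds'' killing $H^i(M,\tfrac12K_M)$ for $0<i<n$. The Kawamata--Viehweg argument of Assertion~\ref{third assertion} needs $-K_M$ nef and big, and this genuinely fails under positive HSC. Hitchin's positive-HSC K\"ahler metrics on Hirzebruch surfaces $F_k$ with $k\geq 3$ have $-K$ not nef. For instance, $F_4\times F_4$ with the product metric is a simply connected spin fourfold of positive HSC that is not weak Fano.
\item Your fallback is valid when $b_2=1$. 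Then $\rho(M)=1$ because $h^{2,0}(M)=0$, uniruledness forces $-K_M$ to be ample, and Theorem~\ref{first thm} applies. But, as you concede, it leaves the case $n=4$ with $b_2(M_0)>1$ unproved.
\end{enumerate}
As it stands, the proposal does not establish the theorem. The missing ingredient is exactly the averaging lemma, which makes the whole argument dimension-independent.
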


Topological and differential restrictions imposed by curvature positivity often have
striking analogues in transformation group theory. A prominent example is provided by
the $\hat{A}$-genus: for a spin manifold, its vanishing may follow either from the existence of
a Riemannian metric with quasi-positive scalar curvature (\cite{Lic}), or from a (nontrivial) smooth
compact connected Lie group action (\cite{AH}). We refer to \cite[Section 1]{Li} for further discussion of
this analogy.

Our next result may therefore be viewed as a transformation-group analogue of Theorems \ref{first thm} and \ref{second thm}.
\begin{theorem}\label{third refined thm}
Let $M$ be a simply connected smooth manifold such that all odd-dimensional Betti numbers vanish. Suppose that $M$ admits a (nontrivial) smooth compact connected Lie group action. Then $M$ is not homeomorphic to a Calabi-Yau manifold.
\end{theorem}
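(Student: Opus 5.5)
Suppose, for contradiction, that $M$ is homeomorphic to a Calabi-Yau manifold $M_0$ of complex dimension $n$. The plan is to obtain two incompatible facts: an invariant of $M$ vanishes because of the group action, while the same invariant of $M_0$ is nonzero because of the Calabi-Yau structure. Since $M$ is simply connected, so is $M_0$. The invariant I will use is the $\hat{A}$-genus, which is a homeomorphism invariant by Novikov's theorem on rational Pontryagin classes.

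\textbf{Step 1: the Calabi-Yau side.} Since $M_0$ is simply connected, the Bogomolov-Beauville decomposition writes $M_0$ as a product of simply connected strict Calabi-Yau factors and irreducible hyperk\"ahler factors, with no torus factor. Because $c_1(M_0)=0$, the Todd genus equals the $\hat{A}$-genus, and the Todd genus is multiplicative over products. For a strict Calabi-Yau $m$-fold $Y$ one has $h^{0,q}(Y)=0$ for $0<q<m$, so $\chi(\mathcal{O}_Y)=1+(-1)^m$. For a hyperk\"ahler $2r$-fold one has $\chi(\mathcal{O})=r+1$. If some strict Calabi-Yau factor has odd dimension, its Todd genus is $0$.

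\textbf{Step 2: odd Betti numbers rule out that case.} Such a factor has $h^{m,0}=h^{0,m}=1$, so $b_m\neq0$ with $m$ odd. By the K\"unneth formula, $b_m(M_0)\ge b_m(Y)\cdot b_0(\text{rest})\neq0$, contradicting the hypothesis that all odd Betti numbers vanish (these are homeomorphism invariants). Hence every factor is either even-dimensional strict Calabi-Yau (Todd genus $2$) or hyperk\"ahler (Todd genus $r+1$). Therefore $\hat{A}(M_0)=\mathrm{Td}(M_0)\neq 0$. In particular $n$ is even, which also follows directly from $b_n\neq0$ via $h^{n,0}=1$.

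\textbf{Step 3: the transformation-group side.} $M_0$ is spin, since $w_2\equiv c_1=0 \bmod 2$. The class $w_2$ is a homotopy invariant by Wu's formula, so $M$ is also spin. Because $M$ is simply connected, any nontrivial action of a compact connected Lie group $G$ restricts to a nontrivial $S^1$-action: take a circle in a maximal torus acting nontrivially, which exists because the maximal tori cover $G$. On a simply connected spin manifold, every circle action lifts to the spin structure, possibly after passing to a double cover of $S^1$. The Atiyah-Hirzebruch theorem then gives $\hat{A}(M)=0$.

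\textbf{Step 4: conclusion and main obstacle.} By Novikov's theorem $\hat{A}(M)=\hat{A}(M_0)$, so Steps 2 and 3 contradict each other. I expect the main care to be needed in Step 2, namely in checking the Todd genera of the factors and ensuring the odd-Betti hypothesis really excludes odd strict Calabi-Yau factors. The $\hat{A}$ side is standard.
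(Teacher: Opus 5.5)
Your proposal is correct and follows the paper's proof. You use the same ingredients: spin via $w_2\equiv c_1$ and Wu's formula, Atiyah--Hirzebruch to get $\hat{A}(M)=0$, and Novikov to transfer this to $M_0$. On the Calabi--Yau side you likewise combine the Beauville decomposition, $\hat{A}=\mathrm{Td}$, and K\"unneth/Hodge to clash with the odd-Betti hypothesis, only arranged contrapositively (you show $b_{\mathrm{odd}}=0$ forces $\hat{A}(M_0)\neq 0$, whereas the paper shows $\hat{A}(M_0)=0$ forces an odd-dimensional SCY factor and hence $b_{2p_0+1}\geq 2$). There are two harmless imprecisions: Novikov only gives $\hat{A}(M)=\pm\hat{A}(M_0)$ since the homeomorphism need not preserve orientation, and deducing $w_2(M_0)=0$ requires $c_1(M_0)=0$ integrally, which uses that $H^2(M_0;\mathbb{Z})$ is torsion-free by simple connectivity.
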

\begin{remark}\label{G/P}
Classical such examples are rational homogeneous manifolds $G/P$, where $G$ is a semisimple complex Lie group and $P$ a parabolic subgroup. Such manifolds are indeed Fano and therefore simply-connected.
\end{remark}

It turns out that symplectic manifolds admitting Hamiltonian circle actions with isolated fixed points satisfy the assumptions in Theorem \ref{third refined thm} (see Section \ref{proof of 3thm-1}). Hence we have
\begin{corollary}\label{third thm}
No Calabi-Yau manifold is homeomorphic to a symplectic manifold admitting a Hamiltonian circle action with isolated fixed points.
\end{corollary}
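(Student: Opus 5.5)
The plan is to reduce Corollary \ref{third thm} to Theorem \ref{third refined thm}. Let $(M,\omega)$ be a closed symplectic $2n$-manifold with a Hamiltonian circle action whose fixed point set $M^{S^1}$ is finite and nonempty. Since the moment map $\mu\colon M\to\mathbb{R}$ attains its maximum and minimum, the action is nontrivial. The circle is compact and connected, and the action is smooth, so the only remaining hypotheses are that $M$ be simply connected and that $b_{2i+1}(M)=0$ for all $i$.

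Both facts come from Frankel's theorem: a moment map of a Hamiltonian circle action is a perfect Morse--Bott function. When the fixed points are isolated, $\mu$ is a Morse function whose critical points are exactly the fixed points. At a fixed point $p$, the isotropy representation splits $T_pM$ into $n$ complex lines with nonzero weights $w_1,\dots,w_n$. In suitable local coordinates,
\[
\mu=\mu(p)+\tfrac12\sum_j w_j|z_j|^2 .
\]
Hence the Morse index at $p$ is twice the number of negative weights, so it is even.

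From here the argument splits into two steps.
\begin{itemize}
\item[(i)] Since every critical point has even index, the Morse inequalities (or lacunarity) give $H^{odd}(M;\mathbb{Z})=0$, and in particular all odd Betti numbers vanish.
\item[(ii)] $M$ admits a CW decomposition with exactly one $0$-cell, the unique minimum. A Morse function with all indices even has a unique local minimum, since $\mu$ is connected-level by Atiyah's connectivity argument or by Morse theory with no index-$1$ points. The decomposition has no $1$-cells because no index equals $1$. A CW complex with a single $0$-cell and no $1$-cells is simply connected.
\end{itemize}

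With these two facts, Theorem \ref{third refined thm} applies to $M$ and shows that $M$ is not homeomorphic to a Calabi-Yau manifold, which proves the corollary.

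The only point requiring care is step (ii). Passing from ``Morse function with even indices'' to ``simply connected'' uses the handle or CW decomposition from Morse theory. One could also use the standard result (Li, Hui) that Hamiltonian $S^1$-manifolds have $\pi_1(M)\cong\pi_1(M^{S^1})$ when the fixed points are isolated, which is trivial in this case. I would cite Frankel or Milnor's Morse theory for the CW structure and regard the rest as routine.
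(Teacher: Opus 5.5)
Your proposal is correct and follows the paper's route: the paper likewise shows (Lemma~\ref{Morse function}) that the moment map is a perfect Morse function with even indices, hence a CW structure with only even cells, a single $0$-cell and no $1$-cells, so $\pi_1(M)=0$ and the odd Betti numbers vanish, and then invokes Theorem~\ref{third refined thm}. One small slip: nontriviality of the action does not follow from the moment map attaining its extrema, since a trivial action is Hamiltonian with constant moment map; it follows instead from the fixed point set being finite while $\dim M>0$.
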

\begin{remark}
Rational homogeneous manifolds $G/P$ also satisfy the assumptions in Corollary \ref{third thm}, as noted in \cite{Fr}. Examples of non-K\"{a}hler symplectic manifolds admitting Hamiltonian circle actions with isolated fixed points can be found in \cite[p. 2]{LP}.
\end{remark}

Two fundamental notions around hyperbolicity in complex geometry are \emph{Kobayashi
hyperbolicity} and \emph{K\"{a}hler hyperbolicity}, introduced respectively by Kobayashi (\cite{Kob2}) and Gromov (\cite{Gr}). A compact complex manifold is Kobayashi hyperbolic if it does not contain a nonconstant entire curve, whereas K\"{a}hler hyperbolicity is more involved and is defined in Section \ref{kahlerbyperbolic}.

Gromov observed that K\"{a}hler hyperbolicity implies Kobayashi
hyperbolicity (see \cite[Cor.4.2]{CY}). A weak form of the Kobayashi conjecture predicts that
Calabi-Yau manifolds are \emph{not} Kobayashi hyperbolic (\cite[Prob. C.1]{Kob2}, \cite{KLV}, \cite{KV}); this remains
widely open. Our final result may be regarded as partial evidence in this direction.
\begin{theorem}\label{fifth thm}
Let $M_0$ be a Calabi-Yau manifold and $(M,g)$ a K\"{a}hler hyperbolic manifold. Then
$\pi_1(M_0)$ is not isomorphic to $\pi_1(M)$.
Consequently, $M_0$ and $M$ are not homotopy equivalent.
\end{theorem}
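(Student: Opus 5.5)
We argue by contradiction, assuming an isomorphism $\pi_1(M_0)\cong\pi_1(M)$. The plan is to compare two structural facts about these fundamental groups. On the Calabi-Yau side, the Bogomolov-Beauville decomposition (reviewed in Section \ref{section-preliminary}) gives a finite \'{e}tale cover $\widetilde{M}_0\to M_0$ with $\widetilde{M}_0\cong T\times\prod_i Y_i\times\prod_j Z_j$. Here $T$ is a complex torus, the $Y_i$ are simply connected strict Calabi-Yau manifolds, and the $Z_j$ are simply connected hyperk\"{a}hler manifolds. Hence $\pi_1(M_0)$ contains a finite-index subgroup isomorphic to $\mathbb{Z}^{2k}$, where $k=\dim_{\mathbb{C}}T$; in particular $\pi_1(M_0)$ is virtually abelian. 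On the K\"{a}hler hyperbolic side, Gromov's theory (\cite{Gr}) shows that $\pi_1(M)$ is infinite, non-amenable and in fact non-elementary word hyperbolic in the relevant sense. Since a virtually abelian group is amenable, the two groups cannot be isomorphic.

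To make this rigorous without relying on word hyperbolicity, I would use Gromov's $L^2$ vanishing/nonvanishing theorem. For a K\"{a}hler hyperbolic $M$ of dimension $m$, the $L^2$ Betti numbers of the universal cover vanish outside the middle degree and $b^{(2)}_m(\widetilde{M})\neq0$. Equivalently, $(-1)^m\chi(M)>0$, and the same holds for every finite cover, since finite covers of K\"{a}hler hyperbolic manifolds are again K\"{a}hler hyperbolic. The key point is then that $L^2$-Betti numbers of an amenable (here virtually $\mathbb{Z}^{2k}$) group all vanish, by Cheeger-Gromov. To transfer this from $\pi_1$ to $M$, note that $\widetilde{M}\to M$ is a $\pi_1(M)$-covering. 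By Cheeger-Gromov, if $\Gamma=\pi_1(M)$ is infinite amenable, then every $L^2$-Betti number of every free cocompact $\Gamma$-space vanishes, so $b^{(2)}_m(\widetilde{M})=0$, a contradiction. If instead $\Gamma$ is finite, then $\widetilde{M}$ is compact. But Gromov's theorem says the universal cover of a K\"{a}hler hyperbolic manifold carries a bounded primitive $\omega=d\eta$, so on a compact $\widetilde{M}$ the form $\omega^m$ would be exact with positive volume, which is impossible. Hence $\pi_1(M)$ is infinite and not amenable, while $\pi_1(M_0)$ is virtually abelian and hence amenable. This gives the contradiction. The final assertion about homotopy equivalence follows immediately, because a homotopy equivalence induces an isomorphism on $\pi_1$.

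The main obstacle is deciding exactly which results to invoke. I must make sure that the virtually-abelian statement holds for Calabi-Yau manifolds in the K\"{a}hler sense (not only projective ones); this is Beauville's theorem, which requires only $c_1=0$ in real cohomology. I also need a clean citation for the vanishing of $L^2$-Betti numbers for amenable groups, which Cheeger-Gromov established for infinite amenable groups. If that citation is inconvenient, an alternative route is to pass to the finite cover of $M$ corresponding to the subgroup $\mathbb{Z}^{2k}$. Call this cover $M'$; it is K\"{a}hler hyperbolic with $\pi_1(M')\cong\mathbb{Z}^{2k}$. One then uses the fact that the universal cover of $M'$ is a $\mathbb{Z}^{2k}$-cover. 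Its $L^2$-cohomology can be computed via Fourier transform over the torus $T^{2k}$, and it vanishes because cohomology with generic flat line-bundle coefficients is zero in every degree, which contradicts Gromov's nonvanishing in the middle degree.
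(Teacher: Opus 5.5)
Your proposal follows the paper's route: virtual abelianness of $\pi_1(M_0)$ from Theorem \ref{BB decomposition}, the Stokes argument excluding finite $\pi_1(M)$, and then Gromov's non-vanishing played against Cheeger--Gromov vanishing. However, the decisive step fails as you state it, and the paper's Lemma \ref{zero of Euler number} is formulated in the same overly general way. It is false that for an infinite amenable $\Gamma$ every free cocompact $\Gamma$-space has vanishing $L^2$-Betti numbers. It is also false that a closed manifold with infinite amenable fundamental group has $\chi=0$. For instance, the blow-up of a two-dimensional complex torus at a point is a compact K\"ahler surface $X$ with $\pi_1(X)\cong\mathbb{Z}^4$ and $\chi(X)=1$, so by Atiyah's $L^2$-index theorem $b^{(2)}_2(\widetilde X)=1$. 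Similarly, $(S^1\times S^3)\sharp\,\mathbb{C}P^2$ has $\pi_1\cong\mathbb{Z}$ and $\chi=1$. What Cheeger--Gromov prove is the vanishing of the $L^2$-Betti numbers of the group $\Gamma$, i.e.\ of $E\Gamma$. This passes to $\widetilde X$ when $X$ is aspherical (more generally, when $H_*(\widetilde X;\mathbb{C})$ is finite-dimensional), but nothing of the kind is available for $\widetilde M$. Your fallback through the $\mathbb{Z}^{2k}$-cover $M'$ breaks for the same reason. The alternating sum of $\dim H^p(M';\mathbb{C}_\xi)$ equals $\chi(M')$ for every rank-one local system $\mathbb{C}_\xi$. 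So asserting that these groups vanish for generic $\xi$ is asserting $\chi(M')=0$, which fails in general (same example) and, given Gromov's theorem, is exactly the contradiction you are trying to produce.

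The missing idea is the one in your first paragraph that you then set aside: K\"ahler hyperbolicity directly forces $\pi_1(M)$ to be non-amenable. If $\widetilde\omega=d\beta$ with $\beta$ bounded, then $\widetilde\omega^m=d(\beta\wedge\widetilde\omega^{m-1})$ has a bounded primitive. Stokes' theorem then gives $\mathrm{vol}(D)\le C\,\mathrm{vol}(\partial D)$ for every compact domain $D\subset\widetilde M$ with smooth boundary. By Brooks' theorem, a cocompact cover satisfying such a linear isoperimetric inequality has non-amenable deck group; one can also see this directly by taking unions of translates of a fundamental domain along a F{\o}lner sequence. Since virtually abelian groups are amenable, this contradicts $\pi_1(M)\cong\pi_1(M_0)$ at once. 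It also subsumes your finite-group case and makes the $L^2$ machinery unnecessary. Alternatively, Gromov's spectral gap for the Laplacian on $L^2$ functions on $\widetilde M$, combined with Brooks, gives the same conclusion. As a side remark, $\pi_1$ of a K\"ahler hyperbolic manifold need not be word hyperbolic: a product of two compact Riemann surfaces of genus at least $2$ is K\"ahler hyperbolic, and its fundamental group contains $\mathbb{Z}^2$.
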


\section{Preliminaries}\label{section-preliminary}
\subsection{The $\hat{A}$-genus}
The \emph{$\hat{A}$-genus} of manifolds is a typical example of the oriented genus introduced by Hirzebruch (\cite{Hi}) and defined as follows. Let the total Pontrjagin class $p(X)$ of a manifold $X$ be formally decomposed as
\be\label{Pondecom} p(X)=1+p_1(X)+p_2(X)+\cdots=:\prod_{i\geq 1}(1+y_i)\in H^{4\ast}(X;\mathbb{Z}),\ee
i.e., the $j$-th Pontrjagin class $p_j(X)$ is represented as the $j$-th elementary symmetric function of the variables $y_1,y_2,\ldots$. Then
\be\label{hatdef}\hat{A}(X):=<\prod_{i\geq1}
\frac{\sqrt{y_i}/2}{\sinh(\sqrt{y_i}/2)},~[X]>\in\mathbb{Q},\ee
where $[X]$ is the fundamental class determined by the orientation. By definition $\hat{A}(X)=0$ unless the dimension of $X$ is divisible by four.

It is well-known that $\hat{A}(X)\in\mathbb{Z}$ whenever $X$ is \emph{spin}, which was observed by Borel and Hirzebruch (\cite[\S 25]{BH}). This fact both motivated and was later explained by the Atiyah-Singer index theorem, which showed that $\hat{A}(\cdot)$ of a spin manifold is the index of its Dirac operator (\cite[\S 5]{AS}).

The following classical vanishing results are due to Lichnerowicz (\cite{Lic}) and Atiyah-Hirzebruch (\cite{AH}) respectively.
\begin{theorem}[Lichnerowicz, Atiyah-Hirzebruch]\label{Lich-AH}
Let $X$ be a spin manifold. Then $\hat{A}(X)=0$ if $X$ admits either a quasi-positive scalar curvature Riemannian metric, or a nontrivial compact connected Lie group action.
\end{theorem}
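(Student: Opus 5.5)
The two vanishing statements have different sources, so I would prove them separately. In both I would use the fact recalled above: for a spin manifold $X$ of dimension divisible by four, $\hat{A}(X)$ equals the index of the chiral Dirac operator $D^+\colon S^+\to S^-$ (Atiyah-Singer, \cite[\S 5]{AS}). So in each case it suffices to show that $\operatorname{ind} D^+=\dim\ker D^+-\dim\ker D^-$ vanishes. In dimensions not divisible by four there is nothing to prove.

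\emph{Curvature case.} I would use the Lichnerowicz-Weitzenb\"ock formula $D^2=\nabla^*\nabla+\frac{s}{4}$ on spinors, where $s$ is the scalar curvature. If $D\varphi=0$, integrating $\langle D^2\varphi,\varphi\rangle$ over $X$ gives
\[
0=\int_X|\nabla\varphi|^2+\frac14\int_X s|\varphi|^2 .
\]
Since $s\geq0$, both terms vanish, so $\nabla\varphi=0$ and $s|\varphi|^2\equiv0$. A parallel spinor has constant length, and $s>0$ somewhere, so $\varphi\equiv0$. Hence $\ker D=0$, both chiral kernels vanish, and $\hat{A}(X)=0$.

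\emph{Group action case.} First I reduce to a circle. If a compact connected Lie group $G$ acts nontrivially, then some circle subgroup acts nontrivially. The reason is that every element of $G$ lies in a conjugate of a maximal torus $T$, so $T$ acts nontrivially. A nontrivially acting torus contains a nontrivially acting circle, since a dense set of circle subgroups generates $T$.

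Next I lift the $S^1$-action to the spin structure. The obstruction is only a sign, so after passing to the double cover $S^1\to S^1$ if necessary, the action lifts to the spinor bundles. This lifted action commutes with $D$ for an invariant metric. The index then refines to a character
\[
\operatorname{ind}_t(D^+)\in\mathbb{Z}[t,t^{-1}],\qquad \hat{A}(X)=\operatorname{ind}_1(D^+).
\]

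For $t$ generic I would apply the Atiyah-Bott-Segal-Singer Lefschetz formula, which writes $\operatorname{ind}_t$ as a sum over the fixed components $F$. Each contribution is a rational function of $t$ built from the normal weights $m_j\neq0$, and its denominators are products of factors $t^{m_j/2}-t^{-m_j/2}$. If there are no fixed points, $\operatorname{ind}_t=0$ for generic $t$, hence identically, and we are done.

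Otherwise I analyze each fixed-point contribution as $t\to0$ and as $t\to\infty$. The spin hypothesis makes numerator and denominator homogeneous of matching half-weight degrees, so each contribution stays bounded and in fact tends to $0$: the normal bundle has positive rank because the action is nontrivial on the connected manifold $X$. A Laurent polynomial that is bounded at both $0$ and $\infty$ is constant, and here the constant is its limit, namely $0$. Therefore $\operatorname{ind}_t\equiv0$, and in particular $\hat{A}(X)=\operatorname{ind}_1=0$.

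I expect the main obstacle to be this last limit computation. One has to track the orientation and the spin-structure signs $\epsilon(F)$ on each fixed component, and to check carefully that the half-integral exponents combine to give decay rather than mere boundedness. The lifting step, including the double-cover subtlety, is secondary but also needs care.
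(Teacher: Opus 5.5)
Your proposal is correct. The paper gives no proof of its own and simply cites Lichnerowicz and Atiyah--Hirzebruch, and your two arguments are exactly those classical proofs: the Weitzenb\"ock formula $D^2=\nabla^*\nabla+\frac{s}{4}$ kills harmonic spinors, and the equivariant Dirac index is evaluated via the Lefschetz fixed-point formula with limits $t\to 0,\infty$. On the step you flagged, decay rather than mere boundedness follows because each normal factor $\bigl(t^{m/2}e^{x/2}-t^{-m/2}e^{-x/2}\bigr)^{-1}$ tends to $0$ at both ends; the spinor numerator carries only half the weight of the denominator $\det(1-t|_N)$, so the degrees do not ``match'' but differ by a factor of two.
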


The next fact is a consequence of the topological invariance of rational Pontrjagin classes due to Novikov (\cite{No}).
\begin{lemma}\label{hattopoinv}
Let $X$ and $Y$ be two homeomorphic manifolds. Then $\hat{A}(X)=0$ if and only if $\hat{A}(Y)=0$.
\end{lemma}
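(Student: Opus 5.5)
The plan is to reduce the statement to Novikov's theorem on the topological invariance of rational Pontrjagin classes (\cite{No}), and then to handle the orientation issue separately.

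\emph{Step 1: transport the Pontrjagin classes.} Let $f\colon X\to Y$ be a homeomorphism. Novikov's theorem gives
$$f^{\ast}p_j(Y)=p_j(X)\in H^{4j}(X;\mathbb{Q})\quad\text{for all } j.$$
Write $\hat{A}_k(p_1,\ldots,p_k)\in\mathbb{Q}[p_1,\ldots,p_k]$ for the degree-$4k$ component of the multiplicative sequence in (\ref{hatdef}), where $\dim X=\dim Y=4k$. If $\dim X$ is not divisible by four, both $\hat{A}$-genera vanish by definition and there is nothing to prove. Otherwise naturality of cup products gives
$$f^{\ast}\hat{A}_k(Y)=\hat{A}_k(X)\in H^{4k}(X;\mathbb{Q}).$$

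\emph{Step 2: compare fundamental classes.} Since $X$ and $Y$ are closed, connected and oriented, the homeomorphism induces an isomorphism on top homology. Hence $f_{\ast}[X]=\pm[Y]$. The sign is $+1$ exactly when $f$ preserves the chosen orientations. We do not assume this, because the homeomorphisms in Theorem \ref{first thm} need not preserve orientation. Then
$$\hat{A}(X)=\langle \hat{A}_k(X),[X]\rangle=\langle f^{\ast}\hat{A}_k(Y),[X]\rangle=\langle \hat{A}_k(Y),f_{\ast}[X]\rangle=\pm\hat{A}(Y).$$

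\emph{Step 3: conclude.} In either case $\hat{A}(X)=\pm\hat{A}(Y)$, so $\hat{A}(X)=0$ if and only if $\hat{A}(Y)=0$. This is exactly why the lemma is stated as a vanishing equivalence rather than an equality: an orientation reversal can flip the sign of the genus.

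The only substantive ingredient is Novikov's theorem. The one delicate point is the orientation sign in Step 2, and it is harmless once we recognize that only vanishing is claimed.
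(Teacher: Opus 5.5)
Your proposal is correct and follows essentially the same route as the paper: Novikov's topological invariance of rational Pontrjagin classes, naturality of the Kronecker pairing, and $f_{\ast}[X]=\pm[Y]$, which together give $\hat{A}(X)=\pm\hat{A}(Y)$. Working in rational cohomology from the outset does the same job as the paper's remark that torsion elements contribute nothing to integration.
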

\begin{proof}
Let $$f:~X\overset{\cong}{\longrightarrow}Y$$ be a homeomorphism, and denote by $$\hat{\mathfrak{A}}(X):=\prod_{i\geq1}
\frac{\sqrt{y_i}/2}{\sinh(\sqrt{y_i}/2)},$$
in the notation of (\ref{Pondecom}) and (\ref{hatdef}). The homeomorphic invariance of rational Pontrjagin classes, as well as the fact that torsion elements contribute nothing to integration, yield
\be
\begin{split}
\hat{A}(X)&=<\hat{\mathfrak{A}}(X),~[X]>\\
&=<f^{\ast}(\hat{\mathfrak{A}}(Y)),~[X]>\\
&=<\hat{\mathfrak{A}}(Y),~f_{\ast}([X])>\\
&=<\hat{\mathfrak{A}}(Y),\pm[Y]>\\
&=\pm\hat{A}(Y).
\end{split}\nonumber\ee
Here ``$\pm$" depends on whether or not $f$ is orientation-preserving.
\end{proof}

\subsection{The Todd genus}
Assume now that $(X,J)$ is an almost-complex manifold and its total Chern class $c(X):=c(X,J)$ is given by
\be\label{Cherndecom}c(X)=1+c_1(X)+c_2(X)+\cdots=:\prod_{i\geq1}(1+x_i)\in H^{2\ast}(X;\mathbb{Z}).\ee
Such $x_i$ are usually called \emph{formal Chern roots} of $X$. Then the \emph{Todd genus} of $X$, $\text{Td}(X)$, is a typical example of the complex genus defined by
\be\text{Td}(X):=
<\prod_{i\geq1}\frac{x_i}{1-e^{-x_i}},~[X]>\in\mathbb{Z}.\ee The integrality of $\text{Td}(X)$ is due to the (generalized) Hirzebruch-Riemann-Roch theorem (\cite{Hi},\cite{AS}).

When the almost-complex structure $J$ is integrable, the H-R-R implies that
\be\label{Todd genus}\text{Td}(X)=\chi(X,\mathcal{O}_X):=\sum_{q\geq0}(-1)^qh ^{0,q},\ee
where $h^{0,q}$ are the Hodge numbers and $\chi(X,\mathcal{O}_X)$ is usually called the \emph{holomorphic Euler characteristic}.

If the total Chern class of $(X,J)$ has the decomposition (\ref{Cherndecom}), the total Pontrjagin class then has the following form
$$p(X)=\prod_{i\geq1}(1+x_i^2),$$
and hence we have
\be\hat{A}(X)=<\prod_{i\geq1}
\frac{x_i/2}{\sinh(x_i/2)},~[X]>.\ee

The following identity will play a key role in relating the $\hat{A}$-genus and the Todd genus.
\be\label{relation}\frac{x/2}{\sinh(x/2)}=\frac{x}{1-e^{-x}}\cdot e^{-x/2}.\ee
In particular, we have
\begin{lemma}\label{equal lemma}
For a Calabi-Yau manifold the $\hat{A}$-genus and the Todd genus coincide.
\end{lemma}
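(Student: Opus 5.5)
The plan is to integrate the identity (\ref{relation}) over the formal Chern roots and then kill the exponential correction factor using the vanishing of $c_1$.

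Let $x_1,\ldots,x_n$ be the formal Chern roots of the Calabi-Yau manifold $M_0$ as in (\ref{Cherndecom}). Taking the product of (\ref{relation}) over all $i$ gives
\[
\prod_{i}\frac{x_i/2}{\sinh(x_i/2)}=\prod_{i}\frac{x_i}{1-e^{-x_i}}\cdot e^{-\frac{1}{2}\sum_i x_i}=\prod_{i}\frac{x_i}{1-e^{-x_i}}\cdot e^{-c_1(M_0)/2},
\]
since $\sum_i x_i=c_1(M_0)$. This is an identity of formal power series in the $x_i$. Because both sides are symmetric, it becomes an identity in the cohomology ring $H^{\ast}(M_0;\mathbb{Q})$ once the elementary symmetric functions are replaced by the Chern classes.

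Next, the hypothesis gives $c_1(M_0)=0$ in $H^{1,1}(M_0;\mathbb{R})$, and hence in $H^2(M_0;\mathbb{R})$. Its image in $H^2(M_0;\mathbb{Q})$ therefore vanishes, because $H^2(M_0;\mathbb{Q})\to H^2(M_0;\mathbb{R})$ is injective. At worst, the integral class $c_1(M_0)$ is torsion. Consequently $e^{-c_1(M_0)/2}=1$ in rational cohomology, and the two characteristic forms agree there.

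Finally, pair both sides with $[M_0]$. The left side gives $\hat{A}(M_0)$ by the expression for $\hat{A}$ in terms of Chern roots recorded just before (\ref{relation}). The right side gives $\text{Td}(M_0)$. Torsion contributes nothing to the evaluation, which settles the integral-versus-rational point.

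The only step needing any care is this passage from $c_1=0$ in real Dolbeault cohomology to the vanishing of $e^{-c_1/2}$ in the rational cohomology used for evaluation, and it is handled by the injectivity and torsion remarks above. Everything else is formal.
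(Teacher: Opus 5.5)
Your proposal is correct and follows essentially the same route as the paper: take the product of (\ref{relation}) over the formal Chern roots, factor out $e^{-c_1(M_0)/2}$, and drop it because $c_1(M_0)=0$. Your added remark about passing from real to rational cohomology and torsion not affecting evaluation is a correct detail that the paper leaves implicit.
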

\begin{proof}
Let $x_i$ be the formal Chern roots of a Calabi-Yau manifold $M$. Then
\be\begin{split}
\hat{A}(M)&=<\prod_i\frac{x_i/2}{\sinh(x_i/2)},~[M]>\\
&=<\prod_i\big(\frac{x_i}{1-e^{-x_i}}\cdot e^{-x_i/2}\big),~[M]>\quad\big(\text{by (\ref{relation})}\big)\\
&=<(\prod_i\frac{x_i}{1-e^{-x_i}})e^{-c_1(M)/2},~[M]>\\
&=<\prod_i\frac{x_i}{1-e^{-x_i}},~[M]>\\
&=\text{Td}(M).
\end{split}
\ee
\end{proof}

\subsection{The Bogomolov-Beauville decomposition}
\begin{definition}\label{def}
\begin{enumerate}
\item
An $n$-dimensional simply connected Calabi-Yau manifold is called \emph{strict Calabi-Yau} (\emph{SCY} for short) if $n\geq3$, $h^{n,0}=1$ and $h^{p,0}=0$ for $1<p<n$.

\item
A $2n$-dimensional simply connected K\"{a}hler manifold is called \emph{hyperK\"{a}hler} if there exists an everywhere nondegenerate holomorphic two-form $\sigma$ such that
$$H^{2p,0}(M)=\mathbb{C}\cdot\sigma^p,\quad H^{2p+1,0}(M)=0,\quad 1\leq p\leq n.$$
\end{enumerate}
\end{definition}
\begin{remark}
The odd Chern classes of a hyperK\"{a}hler manifold vanish in real cohomology. In particular, every hyperK\"{a}hler manifold is Calabi-Yau in the sense used here.
\end{remark}
The Todd genera of SCY and hyperK\"{a}hler manifolds follow immediately from (\ref{Todd genus}), and we record the values for later use.
\begin{lemma}\label{Todd genus of SCY and hyperKahler}
The Todd genus of an $n$-dimensional SCY manifold is $1+(-1)^n$, and that of a $2n$-dimensional hyperK\"{a}hler manifold is $n+1$.
\end{lemma}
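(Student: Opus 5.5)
The plan is to compute $\chi(M,\mathcal{O}_M)=\sum_{q\geq0}(-1)^q h^{0,q}(M)$ via (\ref{Todd genus}), using Hodge symmetry $h^{0,q}=h^{q,0}$, which holds because both SCY and hyperK\"{a}hler manifolds are compact K\"{a}hler. This reduces the Todd genus to an alternating sum of the dimensions of the spaces of holomorphic $q$-forms, and Definition \ref{def} prescribes exactly these dimensions.

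For an $n$-dimensional SCY manifold $M$, we have $h^{0,0}=1$ since $M$ is connected, and $h^{n,0}=1$ by definition. For $1<p<n$ we get $h^{p,0}=0$ directly from the definition. The index $q=1$ needs a separate word, although the intended reading of the definition presumably covers it. Simple connectivity gives $b_1=0$, and the Hodge decomposition $b_1=2h^{1,0}$ then forces $h^{1,0}=0$. Consequently $\chi(M,\mathcal{O}_M)=1+(-1)^n$.

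For a $2n$-dimensional hyperK\"{a}hler manifold, the definition gives $h^{2p,0}=1$ for $1\leq p\leq n$, spanned by $\sigma^p$, together with $h^{0,0}=1$. It also gives $h^{2p+1,0}=0$ for $1\leq p\leq n$, and $h^{1,0}=0$ follows again from simple connectivity. Hence $\chi=\sum_{p=0}^{n}1=n+1$.

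There is no real obstacle here. The only point needing care is the index $q=1$, which the definitions treat implicitly, and simple connectivity together with the Hodge decomposition settles it in both cases.
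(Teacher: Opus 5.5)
Your proposal is correct and matches the paper's argument, which simply reads off $\chi(M,\mathcal{O}_M)=\sum_q(-1)^q h^{0,q}$ from (\ref{Todd genus}) using the Hodge numbers prescribed in Definition \ref{def}. Your use of $\pi_1=0$ and $b_1=2h^{1,0}$ to get $h^{1,0}=0$ fills in the index that the definition leaves implicit: it is written with $1<p<n$ in the SCY case and covers only $h^{2p+1,0}$ with $p\geq1$ in the hyperK\"{a}hler case.
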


We shall use the following form of the Bogomolov-Beauville decomposition theorem (see \cite{Be}).
\begin{theorem}[Bogomolov-Beauville decomposition]\label{BB decomposition}
Every Calabi-Yau manifold admits a finite \'{e}tale cover of the form
$$T\times\prod_iY_i\times\prod_jZ_j,$$
where $T$ is a complex torus, each $Y_i$ is a SCY manifold, and each $Z_j$ is hyperK\"{a}hler.
\end{theorem}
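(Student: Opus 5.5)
The statement immediately above is the Bogomolov--Beauville decomposition, Theorem \ref{BB decomposition}, which the paper quotes from \cite{Be} rather than proves. What follows is a sketch of the standard argument in the form I would reproduce it.

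\emph{Step 1: a Ricci-flat metric.} By Yau's theorem (\cite{Yau}), every K\"{a}hler class on a Calabi-Yau manifold $M_0$ contains a Ricci-flat K\"{a}hler metric $g$. The universal cover $\widetilde{M_0}$ is complete and Ricci-flat. The Cheeger--Gromoll splitting theorem then gives an isometric splitting $\widetilde{M_0}=\mathbb{C}^k\times N$. Here $N$ is compact and simply connected, since a line in $N$ would contradict the maximality of the Euclidean factor. The splitting is holomorphic because the complex structure is parallel.

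\emph{Step 2: a finite \'{e}tale cover with torus factor.} The group $\pi_1(M_0)$ acts on $\mathbb{C}^k\times N$ by isometries that preserve the product decomposition. Its image in $\mathrm{Isom}(\mathbb{C}^k)$ is a discrete cocompact group. By Bieberbach's theorem, and because $\mathrm{Isom}(N)$ is compact, there is a finite index subgroup acting by translations on $\mathbb{C}^k$ and trivially on $N$. Passing to the corresponding finite \'{e}tale cover gives $T\times N$ with $T=\mathbb{C}^k/\Lambda$ a complex torus.

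\emph{Step 3: decomposing $N$.} Since $N$ is compact, simply connected and Ricci-flat K\"{a}hler, de Rham's theorem splits it as $N=\prod N_l$ into irreducible factors, each of which is again Ricci-flat K\"{a}hler. Each $N_l$ is irreducible, Ricci-flat and non-flat, so by Berger's classification its holonomy is either $SU(m)$ or $Sp(m/2)$, where $m=\dim_{\mathbb{C}}N_l$. Symmetric spaces are excluded because a compact Ricci-flat symmetric space is flat.

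\emph{Step 4: identifying the factors.} By Bochner's principle, on a compact Ricci-flat K\"{a}hler manifold every holomorphic $p$-form is parallel. Hence $H^{p,0}(N_l)$ is identified with the holonomy invariants in $\Lambda^{p,0}$. For holonomy $SU(m)$ this gives $h^{p,0}=0$ for $0<p<m$ and $h^{m,0}=1$. Since a factor with $m\leq2$ and holonomy $SU(m)$ is a K3 surface, and $SU(2)=Sp(1)$, it can be counted as hyperK\"{a}hler. The remaining $SU(m)$ factors have $m\geq3$ and are SCY, the $Y_i$. For holonomy $Sp(m/2)$ the invariant forms are exactly the powers of the holomorphic symplectic form $\sigma$, so these factors are hyperK\"{a}hler, the $Z_j$.

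\emph{Main obstacle.} The delicate step is Step 2. One must show that the deck group acts compatibly with the product and that a finite index subgroup acts by pure translations on the flat factor and trivially on $N$. This needs Bieberbach's theorems together with compactness of $\mathrm{Isom}(N)$, which in turn comes from the compactness of $N$ established through Cheeger--Gromoll.
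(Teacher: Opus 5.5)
Since the paper quotes this theorem from Beauville without proof, the comparison is with the standard argument, which your outline follows. Steps 3 and 4 are fine in substance. Step 2 has a genuine gap, exactly where you locate the difficulty: compactness of $\mathrm{Isom}(N)$ does not give a finite index subgroup of $\Gamma=\pi_1(M_0)$ acting trivially on $N$. Compactness makes the projection $\Gamma\to\mathrm{Isom}(\mathbb{C}^k)$ proper, so its image is crystallographic and its kernel is finite. Bieberbach then gives a finite index $\Gamma_0$ whose elements act by $(t_\lambda,\rho(\gamma))$, with $t_\lambda$ a translation. Nothing in your argument prevents $\rho(\Gamma_0)$ from being an infinite (e.g.\ dense) subgroup of a positive-dimensional compact group. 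For comparison, let $\mathbb{Z}^2$ act on $\mathbb{R}^2\times S^2$ by unit translations coupled with irrational rotations. This action is free, isometric and cocompact, $\mathrm{Isom}(S^2)$ is compact, yet no finite index subgroup acts trivially on $S^2$. In that situation the quotient is only a flat $N$-bundle over a torus, not a product $T\times N$.

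The missing ingredient is that $\mathrm{Isom}(N)$ is \emph{finite}, which uses Ricci-flatness a second time. By Bochner's formula $\frac12\Delta|X|^2=|\nabla X|^2-\mathrm{Ric}(X,X)$, integrated over the compact Ricci-flat $N$, every Killing field $X$ is parallel. Then $X^\flat$ is a parallel, hence harmonic, $1$-form, so $X=0$ because $b_1(N)=0$. Thus $\mathrm{Isom}(N)$ is a compact Lie group of dimension zero, i.e.\ finite. Now $\Gamma':=\ker\big(\Gamma\to\mathrm{Isom}(N)\big)$ has finite index and acts trivially on $N$. Hence it acts freely, properly discontinuously and cocompactly on $\mathbb{C}^k$. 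Bieberbach then gives a finite index lattice of translations $\Lambda\subset\Gamma'$, and $(\mathbb{C}^k\times N)/\Lambda=(\mathbb{C}^k/\Lambda)\times N$ with $\mathbb{C}^k/\Lambda$ a complex torus.

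A smaller point in Step 1: the absence of lines does not by itself force $N$ to be compact (a paraboloid contains no line). You also need two facts: $\Gamma$ induces a cocompact isometric action on $N$, and a noncompact manifold with cocompact isometry group contains a line. Both are part of the Cheeger--Gromoll theorem for compact manifolds with $\mathrm{Ric}\geq0$, so quoting that theorem directly suffices.
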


\section{Proof of Theorem \ref{first thm}}\label{proof of 1thm}
\subsection{Calabi-Yau manifolds with vanishing $\hat{A}$-genus}
The proof is to derive a contradiction by assuming that $M_0$ is homeomorphic to $M$ under the conditions in Theorem \ref{first thm}, as done in \cite[Thm 1.4]{OP}.

The following lemma, on which both Theorems \ref{first thm} and \ref{second thm} are based, is essentially contained in \cite[Thm 3.1]{OP}.
\begin{lemma}\label{BB lemma}
A simply-connected $M_0$ with $\hat{A}(M_0)=0$ is of the form
$$M_0=M_1\times M_2,$$
where $M_1$ is an odd-dimensional SCY manifold, and $M_2$ is either a point or a product of some SCY and hyperK\"{a}hler manifolds.
\end{lemma}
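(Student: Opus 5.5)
Since $M_0$ is simply connected, the Bogomolov-Beauville decomposition (Theorem \ref{BB decomposition}) gives a finite \'{e}tale cover of the stated form. A connected finite \'{e}tale cover of a simply connected manifold is an isomorphism. Moreover, a complex torus factor of positive dimension would contribute $\pi_1(T)\cong\mathbb{Z}^{2\dim T}\neq0$. Hence $T$ is a point, and $M_0$ itself is biholomorphic to a product $\prod_iY_i\times\prod_jZ_j$ of SCY and hyperK\"{a}hler manifolds.

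Next I compute $\hat{A}(M_0)$ multiplicatively. By Lemma \ref{equal lemma} each factor is Calabi-Yau, so its $\hat{A}$-genus equals its Todd genus. The Todd genus is multiplicative for products, since Chern roots of a product are the union of those of the factors and the characteristic form factors. Therefore
$$\hat{A}(M_0)=\text{Td}(M_0)=\prod_i\text{Td}(Y_i)\prod_j\text{Td}(Z_j).$$
By Lemma \ref{Todd genus of SCY and hyperKahler}:
\begin{itemize}
\item a hyperK\"{a}hler factor of complex dimension $2m$ contributes $m+1\neq0$;
\item an SCY factor of dimension $n_i$ contributes $1+(-1)^{n_i}$, which is $2$ for $n_i$ even and $0$ for $n_i$ odd.
\end{itemize}
So $\hat{A}(M_0)=0$ forces at least one SCY factor $Y_{i_0}$ to have odd dimension.

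Set $M_1:=Y_{i_0}$ and let $M_2$ be the product of the remaining factors, taking $M_2$ to be a point if there are none. This gives $M_0=M_1\times M_2$ in the required form.

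The only delicate step is justifying that $M_0$ is literally the product rather than a quotient of it, i.e.\ that the torus is trivial and the cover is trivial. Both follow from simple connectivity as above. I would also note that the SCY factors in Definition \ref{def} are simply connected, so there is no hidden fundamental group in the cover.
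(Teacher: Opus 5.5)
Your proposal is correct and follows the same route as the paper: simple connectivity makes the Bogomolov-Beauville cover trivial with no torus factor, and multiplicativity combined with Lemmas \ref{equal lemma} and \ref{Todd genus of SCY and hyperKahler} then forces an odd-dimensional SCY factor. You are slightly more explicit than the paper in justifying why the finite \'{e}tale cover and the torus factor must be trivial.
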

\begin{proof}
By Theorem \ref{BB decomposition}, $M_0$ can be written as a product of some SCY and hyperK\"{a}hler manifolds. Lemmas \ref{equal lemma} and \ref{Todd genus of SCY and hyperKahler} imply that, among SCY and hyperK\"{a}hler manifolds, exactly \emph{odd}-dimensional SCY manifolds have vanishing $\hat{A}$-genera.  Since $\hat{A}(\cdot)$ is multiplicative with respect to products, the decomposition contains at least one odd-dimensional SCY factor.
\end{proof}

\subsection{Proof of Theorem \ref{first thm}}\label{proof of 1thm-2}
Suppose, for contradiction, that $M_0$ in Theorem \ref{first thm} is homeomorphic to a weak Fano manifold $M$. The proof will follow from several assertions.

\begin{assertion}\label{first assertion}
The K\"{a}hler manifold $M$ is simply connected and projective.
\end{assertion}
\begin{proof}
Since $-K_M$ is big, $M$ is Moishezon (\cite[p.88]{MM}). By Moishezon's theorem (\cite[p.95]{MM}), a K\"{a}hler  Moishezon manifold is projective. A projective manifold with nef and big anti-canonical bundle is rationally connected, hence simply connected (\cite[Thm 1]{Zha}).
\end{proof}

\begin{assertion}\label{second assertion}
The anti-canonical bundle $-K_M$ is divisible by two in $\text{Pic}(M)$, i.e., there exists some line bundle $L\in\text{Pic}(M)$ such that $-K_M=2L$.
\end{assertion}
\begin{proof}
By Assertion \ref{first assertion}, both $M$ and $M_0$ are simply connected. The universal coefficient theorem therefore implies that $H^{1,1}(M_0;\Z)$ is torsion-free. Since $c_1(M_0)$ vanishes in real cohomology, it follows that $$c_1(M_0)=0\in H^{1,1}(M_0;\Z).$$
Thus the second Stiefel-Whitney class
$$w_2(M_0)=0\in H^2(M_0;\Z_2)$$
as it is the \text{mod-$2$} reduction of $c_1(M_0)$. By the Wu formula, Stiefel-Whitney classes are homotopy type invariants (\cite[p.130]{MS}) and consequently $$w_2(M)=0\in H^2(M;\Z_2).$$
Since $w_2(M)$ is the \text{mod-$2$} reduction of $c_1(M)$, this means that $c_1(M)$ is divisible by two in $H^{1,1}(M;\Z)$. Simple connectedness of $M$ then implies that the map
$$c_1(\cdot):~\text{Pic}(M)\longrightarrow H^{1,1}(M;\Z)$$
is indeed an isomorphism. Thus $-K_M$ is also divisible by two in $\text{Pic}(M)$.
\end{proof}

\begin{assertion}\label{third assertion}
We have $$\hat{A}(M)=\chi\big(M,\mathcal{O}_M(-L)\big)=0,$$
where $L$ is such that $-K_M=2L$ as in Assertion \ref{second assertion}.
\end{assertion}
\begin{proof}
Let $x_i$ be formal Chern roots of $M$. The Hirzebruch-Riemann-Roch theorem tells us that
\be\begin{split}
\chi\big(M,\mathcal{O}_M(-L)\big)&=<\big(\prod_{i}
\frac{x_i}{1-e^{-x_i}}\big)\cdot e^{-c_1(L)},~[M]>\\
&=<\big(\prod_{i}
\frac{x_i}{1-e^{-x_i}}\big)\cdot e^{-\sum_ix_i/2},~[M]>\quad(\text{by $-K_M=2L$})\\
&=<\prod_{i}\frac{x_i/2}{\sinh(x_i/2)},~[M]>\quad\big(\text{by (\ref{relation})}\big)\\
&=\hat{A}(M).
\end{split}\ee

It remains to show that $\chi\big(M,\mathcal{O}_M(-L)\big)=0$. Since $-K_M=2L$ is nef and big, so is $L$. The Kawamata-Viehweg vanishing theorem (\cite[p.74]{Kob}) therefore gives
$$H^p\big(M,\mathcal{O}_M(-L)\big)=
H^p\big(M,\mathcal{O}_M(K_M+L)\big)=0,\quad p\geq 1.$$

Moreover $H^0\big(M,\mathcal{O}_M(-L)\big)=0$. Indeed, if there were an effective divisor $D\in|-L|$, then
$$0\leq\int_DL^{n-1}=\int_ML^{n-1}\cdot(-L)=-\int_ML^n,$$
which contradicts the bigness of $L$.
\end{proof}

\begin{assertion}\label{fourth assertion}
Theorem \ref{first thm} holds true.
\end{assertion}
\begin{proof}
By Assertion \ref{third assertion} and Lemma \ref{hattopoinv} we have $\hat{A}(M_0)=0$. The case of $n=4$ is incompatible with Lemma \ref{BB lemma} and hence $M_0$ is not homeomorphic to $M$. If the even dimension $n\geq6$, Lemma \ref{BB lemma} then implies that $M_0=M_1\times M_2$ with $\dim M_2>0$. This contradicts $b_2(M_0)=1$. In either case we obtain a contradiction. This proves Theorem \ref{first thm}.
\end{proof}

\section{Proof of Theorem \ref{second thm}}\label{proof of 2thm}
\subsection{Holomorphic sectional curvature}\label{subsection HSC}
Let $(M,g)$ be a K\"{a}hler manifold, and $\nabla$ the associated  Levi-Civita connection of the K\"{a}hler metric $g$. For any $(1,0)$-type tangent vector fields $X$, $Y$, $Z$, $W$, the (complexified) curvature tensor $R$ of $g$ is defined by
$$R(X,\overline{Y},Z,\overline{W}):=
g(\nabla_X\nabla_{\overline{Y}}Z-
\nabla_{\overline{Y}}\nabla_XZ-
\nabla_{[X,\overline{Y}]}Z,\overline{W}).$$
By the tensor property of $R$ it turns out that $R$ can indeed be defined pointwise.

Under a local coordinate system $\{z^1,\ldots,z^n\}$, the components of the curvature tensor $R$ are given by
$$R_{i\bar{j}k\bar{l}}=
-\frac{\partial^2g_{k\bar{l}}}{\partial z^i\partial\bar{z}^j}+g^{p\bar{q}}\frac{\partial g_{k\bar{q}}}{\partial z^i}\frac{\partial g_{p\bar{l}}}{\partial \bar{z}^j},$$
where $$g_{i\bar{j}}:=
g(\frac{\partial}{\partial z^i},\frac{\partial}{\partial \bar{z^i}}),\quad (g^{j\bar{j}}):=(g_{i\bar{j}})^{-1}.$$

The smooth function $S_g$ on $M$ defined by
$$S_{g}:=2g^{i\bar{j}}g^{k\bar{l}}R_{i\bar{j}k\bar{l}}$$
is the \emph{scalar curvature} of the underlying Riemannian metric of $g$. $S_g$ is called \emph{quasi-positive} if $S_g\geq0$ and is strictly positive at some point.

For $p\in M$ and $X=X^i\frac{\partial}{\partial z^i}\in T_p^{1,0}M$,
the \emph{holomorphic sectional curvature} $H$ of $g$ \big(HSC($g$) for short\big) at the point $p$ and the direction $X$ is defined by
\be\label{hsc}H_p(X):=R(X,\overline{X},X,\overline{X})=R_{i\bar{j}k\bar{l}}\big|_p\cdot X^i\overline{X^j}X^k\overline{X^l}.\nonumber\ee
$H$ is called \emph{nonnegative} if $H_p(X)\geq0$ for any pair $(p,X)$. $H$ is called \emph{quasi-positive} if it is nonnegative, and there exists a point $p\in M$ such that $H_p(X)>0$ for any nonzero $(1,0)$-type vector at $p$.

%As mentioned in front of Theorem \ref{second thm},  both $\text{Ric}(g)$ and $\text{HSC}(g)$ of K\"{a}hler metrics are dominated by bisectional curvature and meanwhile dominate $S_g$. All these dominations can be verified by their definitions except the fact that $\text{HSC}(g)$ dominates $S_g$, whose proof relies on an integral trick. Although this fact is well-known to field experts, this for example is listed in , we still provide a detailed proof below for the reader's convenience as well as for completeness.
The implication needed below is the standard fact that holomorphic sectional curvature determines scalar curvature. For example, it is listed in \cite[p.189]{Zhe} as an exercise. We record the proof for the reader's convenience as well as for completeness.
\begin{lemma}\label{dominate relation}
HSC$(g)$ completely determines $S_g$. In particular, the quasi-positivity of $HSC(g)$ implies that of $S_g$.
\end{lemma}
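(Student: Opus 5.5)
The plan is to use the classical averaging identity (Berger's lemma): at each point $p$, the scalar curvature is a universal positive multiple of the average of $H_p$ over the unit sphere in $T^{1,0}_pM$. Once this is established, quasi-positivity of HSC$(g)$ gives $S_g\geq0$ everywhere. At the point where $H_p(X)>0$ for all nonzero $X$, the average is strictly positive, so $S_g(p)>0$.

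Fix $p$ and choose coordinates with $g_{i\bar j}(p)=\delta_{ij}$. Let $d\sigma$ be the normalized (probability) measure on the unit sphere $S^{2n-1}\subset T^{1,0}_pM\cong\mathbb{C}^n$. I will compute
$$\int_{S^{2n-1}}H_p(X)\,d\sigma(X)=R_{i\bar jk\bar l}\int_{S^{2n-1}}X^i\overline{X^j}X^k\overline{X^l}\,d\sigma.$$
The moment integrals follow from $U(n)$-invariance, or by direct computation via Gaussian integrals. One has
$$\int X^i\overline{X^j}X^k\overline{X^l}\,d\sigma=\frac{\delta_{ij}\delta_{kl}+\delta_{il}\delta_{kj}}{n(n+1)}.$$
The Kähler symmetry $R_{i\bar jk\bar l}=R_{k\bar ji\bar l}$ makes the two contractions agree. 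Hence
$$\int_{S^{2n-1}}H_p\,d\sigma=\frac{2}{n(n+1)}\sum_{i,k}R_{i\bar ik\bar k}=\frac{S_g(p)}{n(n+1)},$$
using the definition $S_g=2g^{i\bar j}g^{k\bar l}R_{i\bar jk\bar l}$. This formula shows that $H$ determines $S_g$ pointwise.

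The only delicate step is the fourth-moment computation. I would handle it by integrating $|X|^{\text{even}}$-weighted monomials against $e^{-|X|^2}$ on $\mathbb{C}^n$ and dividing out the radial factor. The mixed terms vanish by phase rotation $X^i\mapsto e^{i\theta}X^i$, which leaves only the pairings $i=j,k=l$ or $i=l,k=j$. The ratio $\int|X^1|^4:\int|X^1|^2|X^2|^2=2:1$ then fixes the constant. From there the positivity conclusions are immediate, since a continuous nonnegative function on the sphere that is strictly positive everywhere has positive integral.
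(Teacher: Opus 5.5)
Your proposal is correct and is essentially the paper's proof. Both arguments average $H_p$ over the unit sphere of $T^{1,0}_pM$ in a unitary frame, apply the moment identity $\frac{\delta_{ij}\delta_{kl}+\delta_{il}\delta_{kj}}{n(n+1)}$, and use a K\"{a}hler symmetry of $R$ to identify the two contractions; the paper uses the unnormalized spherical measure and so carries the extra factor $\text{Vol}(\mathbb{S}^{2n-1})$, and your Gaussian/phase-rotation derivation justifies the moment identity that the paper simply quotes as classical.
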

\begin{proof}
Let $p\in M$ be an arbitrary point. The proof is to average holomorphic sectional curvature of unit vectors at $p$ to obtain $S_g$. Such a trick is usually attributed to Berger.

We choose a \emph{unitary} basis $\{e_1,\ldots,e_n\}$ of $T^{1,0}_pM$. Let
$$\Sigma_p:=\big\{X=\sum_{i=1}^nX^ie_i\in T^{1,0}_pM,~|X|=1\big\}\cong\mathbb{S}^{2n-1},$$ $d\theta(\cdot)$ the spherical measure on $\Sigma_p$, and  $\text{V}(\mathbb{S}^{2n-1})$ the volume with respect to it. Then
\be
\begin{split}\label{average}
&\int_{X\in\Sigma_p}H_p(X)d\theta(X)\\
=&\int_{X\in\Sigma_p}\Big[R(e_i,\overline{e_j},e_k,\overline{e_l})
X^i\overline{X^j}X^k\overline{X^l}\Big]d\theta(X)\\
=&R(e_i,\overline{e_j},e_k,\overline{e_l})
\frac{\delta_{ij}\delta_{kl}+\delta_{il}\delta_{kj}}{n(n+1)}
\text{Vol}(\mathbb{S}^{2n-1})\\
=&\frac{\text{Vol}(\mathbb{S}^{2n-1})}{n(n+1)}S_g(p).
\end{split}
\ee
Here the second equality in (\ref{average}) is due to the classical identity on spherical measure
$$\frac{1}{\text{Vol}(\mathbb{S}^{2n-1})}
\int_{\mathbb{S}^{2n-1}}X^i\overline{X^j}
X^k\overline{X^l}d\theta(X)
=\frac{\delta_{ij}\delta_{kl}+\delta_{il}\delta_{kj}}{n(n+1)},$$
and the third one in (\ref{average}) is due to the symmetry of the curvature tensor $R$ for a K\"{a}hler metric:
$R_{i\bar{j}k\bar{l}}=R_{i\bar{l}k\bar{j}}.$
\end{proof}
\begin{remark}
This averaging trick and related variants appear, for example, in \cite[Lemma 4.1]{LSY}, \cite[Thm 3.1]{Yan}, \cite[Lemma 4.1]{Li21} and \cite[\S3.3]{Li26}.
\end{remark}
Holomorphic sectional curvature and Ricci curvature are widely believed to be of comparable strength. Deeper relationships between them have been obtained recently whenever $\text{HSC}(g)$ is (quasi)-negative (see \cite{WY1,TY,DT,WY2,CLT,LNZ}).

\subsection{Proof of Theorem \ref{second thm}}
Assume, for contradiction, that $M_0$ is homeomorphic to $M$ in Theorem \ref{second thm}. The proof follows the same strategy as Theorem \ref{first thm}.

\begin{assertion}
The manifold $(M,g)$ is projective and simply-connected.
\end{assertion}
\begin{proof}
Quasi-positive holomorphic sectional curvature implies that the Hodge number $h^{0,2}=0$ (\cite{ZZ,Ta}). Since $M$ is K\"{a}hler, the Kodaira embedding theorem then gives projectivity. Moreover, a projective manifold with a quasi-positive holomorphic sectional curvature K\"{a}hler metric is rationally connected (\cite{HW}), and hence simply connected.
\end{proof}
\begin{remark}
For strictly positive holomorphic sectional curvature,  projectivity and rational connectedness were established by Yang (\cite{Yang18}). Related generalizations can be found in \cite{Ni,NZ,Li24, Li26,DN25,Ta} and the references therein.
\end{remark}

\begin{assertion}
We have $\hat{A}(M)=0$.
\end{assertion}
\begin{proof}
As discussed in the proof of Assertion \ref{second assertion}, the manifold $M$ in Theorem \ref{second thm} is spin. Moreover, Lemma \ref{dominate relation} yields that the scalar curvature $S_g$ is quasi-positive. Then Lichnerowicz's classical vanishing theorem (Theorem \ref{Lich-AH}) leads to $\hat{A}(M)=0$.
\end{proof}

\begin{assertion}
Theorem \ref{second thm} is true.
\end{assertion}
\begin{proof}
The remainder of the argument is identical to the proof of Assertion \ref{fourth assertion}.
\end{proof}

\section{Proofs of Theorem \ref{third refined thm} and Corollary \ref{third thm}}\label{proof of 3thm}
\subsection{Proof of Theorem \ref{third refined thm}}\label{proof of 3thm-2}
\begin{proof}
Assume, for contradiction, that a Calabi-Yau manifold $M_0$ is homeomorphic to a smooth manifold $M$ satisfying $\pi_1(M)=0$, $b_{2p+1}(M)=0$ for all $p$, and admitting a nontrivial smooth compact connected Lie group action.

Since $M_0$ is simply connected, the same integral-cohomology argument as in Assertion \ref{second assertion} gives
$$c_1(M_0)=0\in H^{1,1}(M_0;\Z),$$
and hence $w_2(M_0)=0$. The Wu formula argument used above therefore shows that $M$ is spin. The Atiyah-Hirzebruch vanishing theorem (Theorem \ref{Lich-AH}) gives $\hat{A}(M)=0$, and Lemma \ref{hattopoinv} hence yields $\hat{A}(M_0)=0$.

By Lemma \ref{BB lemma},
$$M_0=M_1\times M_2,$$
where $M_1$ is an \emph{odd}-dimensional SCY manifold and $M_2$ is either a point or a product of SCY and hyperK\"{a}hler factors. Write $$\dim M_1=2p_0+1.$$
The K\"{u}nneth formula and Hodge decomposition give
$$b_{2p_0+1}(M)=b_{2p_0+1}(M_0)\geq b_{2p_0+1}(M_1)\geq2h^{0,2p_0+1}(M_1)=2,$$
contrary to the hypothesis on the odd Betti numbers.
\end{proof}

\subsection{Proof of Corollary \ref{third thm}}\label{proof of 3thm-1}
Let $(M,\omega)$ be a symplectic manifold. We refer to \cite[\S3]{Au} for detailed materials discussed below.

A circle action on $M$ is called \emph{symplectic} if it preserves the symplectic form $\omega$. Let $X$ be the generating vector field of a circle action on $(M,\omega)$. A circle acts on $(M,\omega)$ symplectically if and only if the one-form $\omega(X,\cdot)$ is closed. Indeed, the action is symplectic precisely when $\mathcal{L}_{X}\omega=0$, where $\mathcal{L}_{X}$ is the Lie action. Cartan's formula gives
$$d(i_X\omega)=\mathcal{L}_{X}\omega-i_X(d\omega)=0,$$
where $i_X(\cdot)$ is the contraction action. This symplectic circle action is called \emph{Hamiltonian} if the one-form $\omega(X,\cdot)$ is moreover exact, i.e., $\omega(X,\cdot)=\text{d}f$ for some smooth real-valued function $f$ on $M$. This $f$ is called the \emph{moment map} of this Hamiltonian circle action, which is unique up to an additive constant. Note that the fixed-point set of a Hamiltonian circle action is exactly the critical-point set of the moment map $f$, and hence is always nonempty as the points minimizing or maximizing $f$ are always critical.

We use the following standard Morse-theoretic consequences (see \cite{Ki} or \cite[\S3]{Au})
\begin{lemma}\label{Morse function}
Let $(M,\omega)$ be a symplectic manifold admitting a Hamiltonian circle action with isolated fixed points. Then $M$ is simply connected, $H_{\ast}(M;\mathbb{Z})$ is torsion-free and
 $$H_{2p+1}(M;\Z)=0,\quad \text{all $p$}.$$
In particular, all odd-dimensional Betti numbers of $M$ vanish.
\end{lemma}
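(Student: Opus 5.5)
The plan is to use the moment map $f$ as a perfect Morse function and read off the topology from its Morse decomposition. First I will check that $f$ is Morse. At a fixed point $x$, choose an invariant almost complex structure compatible with $\omega$, which exists by averaging over the compact group $S^1$. Then the isotropy representation of $S^1$ on $T_xM$ is a complex representation. Since $x$ is isolated, it splits as $T_xM\cong\bigoplus_{j=1}^n\mathbb{C}_{w_j}$ with all weights $w_j\neq0$. By the equivariant Darboux theorem (or the equivariant Morse lemma), in suitable local coordinates $f=f(x)+\frac12\sum_j w_j|z_j|^2$. Hence $x$ is a nondegenerate critical point, and its Morse index is $2\cdot\#\{j: w_j<0\}$, which is even. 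So $f$ is a Morse function with finitely many critical points, all of even index.

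Next I will apply the standard cell-decomposition consequence of Morse theory. $M$ has the homotopy type of a CW complex with one cell of dimension $\lambda$ for each critical point of index $\lambda$. Because all cells have even dimension, the cellular chain complex has zero differentials. Therefore $H_\ast(M;\Z)$ is free abelian, concentrated in even degrees, with rank in degree $2k$ equal to the number of fixed points of index $2k$. In particular $H_{2p+1}(M;\Z)=0$ and $H_\ast(M;\Z)$ is torsion-free.

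For simple connectivity, note that the minimum of $f$ is a critical point of index $0$. By connectedness of $M$ and of the level sets of the moment map (Atiyah--Guillemin--Sternberg), or simply from the minimality argument, it is the unique index-$0$ critical point. Since there are no index-$1$ cells, the CW complex has a single $0$-cell and no $1$-cells, so its $1$-skeleton is a point and $\pi_1(M)=0$. Even without invoking uniqueness of the minimum, the absence of $1$-cells together with connectedness forces a single $0$-cell: each additional $0$-cell would have to be joined to the others by $1$-cells to give a connected space.

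The main obstacle is the first step, establishing that isolated fixed points are nondegenerate with even index. This rests on the local normal form of the moment map near a fixed point, which I would justify via the equivariant Darboux theorem. Everything else is standard Morse theory.
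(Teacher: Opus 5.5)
Your proposal is correct and follows essentially the same route as the paper. Both treat the moment map as a Morse function with only even indices, deduce a CW structure with only even-dimensional cells (hence zero cellular differentials, and torsion-free homology concentrated in even degrees), and get $\pi_1(M)=0$ from the single $0$-cell and absence of $1$-cells. You add a justification via the local normal form of why the isolated fixed points are nondegenerate, which the paper simply asserts.
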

\begin{proof}
The moment map $f:~M\rightarrow\mathbb{R}$ of a Hamiltonian circle action with isolated fixed points is a \emph{perfect Morse function} whose critical points are exactly the fixed points. The Morse index at each fixed point is twice the number of negative eigenvalues and hence even. The associated CW decomposition therefore has only even-dimensional cells. Since $M$ is connected, it has a single $0$-cell; hence there are no $1$-cells and $\pi_1(M)=0$. The cellular chain complex has zero differentials, which also gives the asserted integral homology statement. In the K\"{a}hler case these facts go back to Frankel (\cite{Fr}).
\end{proof}

Corollary \ref{third thm} now follows immediately from Theorem \ref{third refined thm}.

\section{Proof of Theorem \ref{fifth thm}}\label{proof of 5thm}
\subsection{K\"{a}hler hyperbolic manifolds}\label{kahlerbyperbolic}
Let $(X,g)$ be a Riemannian manifold, \emph{not necessarily compact}, and
$$p:~(\widetilde{X},\widetilde{g})\rightarrow(X,g)$$
the \emph{universal} covering with $\widetilde{g}:=p^{\ast}(g)$. A differential form $\alpha$ on $(X,g)$ is called \emph{$d$-bounded} if $\alpha=d\beta$ and the norm $$\big|\beta\big|_g:=\sup_{x\in X}\big|\beta(x)\big|_{g(x)}<\infty.$$

A form $\alpha$ on $(X,g)$ is called \emph{$\widetilde{d}$-bounded} if $p^{\ast}(\alpha)$ is $d$-bounded on $(\widetilde{X},\widetilde{g})$. The concept of ``$d$-bounded" or ``$\widetilde{d}$-bounded" is of interest only if $X$ or $\widetilde{X}$ is \emph{non}-compact. With this understood, we have the following (\cite[p. 265]{Gr})
\begin{definition}[Gromov]
Let $(M,g)$ be a K\"{a}hler manifold with $\omega$ the associated K\"{a}hler form of $g$. $(M,g)$ is called \emph{K\"{a}hler hyperbolic} if $\omega$ is $\widetilde{d}$-bounded.
\end{definition}
\begin{remark}
Classical examples of K\"{a}hler hyperbolic manifolds include
K\"{a}hler manifolds which are homotopy equivalent to negatively-curved Riemannian manifolds, and
compact quotients of the bounded homogeneous symmetric
domains in $\mathbb{C}^n$ (\cite[p.265]{Gr}, \cite[\S2.2]{CY}). Moreover, submanifolds and products of K\"{a}hler hyperbolic manifolds are still K\"{a}hler hyperbolic. We refer the reader to \cite{CY} and \cite{Li19} for more properties on K\"{a}hler hyperbolic manifolds.
\end{remark}

Gromov's original motivation for introducing K\"{a}hler hyperbolicity was to attack the Singer conjecture on negatively-curved K\"{a}hler manifolds. The detailed treatment of the following materials can be found in \cite{Lu}.

Let $\chi(X)$ and $b^{(2)}_p(X)$ $(0\leq p\leq\dim X)$ be respectively the Euler characteristic and $L^2$-Betti numbers of an even-dimensional smooth manifold $X$. Atiyah's $L^2$-index theorem (\cite{At}) then gives
\be\label{Euler number}\chi(X)=\sum_{p\geq0}(-1)^pb^{(2)}_p(X)\ee

The Singer conjecture asserts that, when $(X,g)$ is a negatively-curved Riemannian manifold of even dimension $2n$, $b^{(2)}_{n}(X)>0$ and $b^{(2)}_p(X)=0$ whenever $p\neq n$. In particular, together with (\ref{Euler number}), this implies another widely open Hopf conjecture: the signed Euler characteristic of a $2n$-dimensional negatively-curved Riemannian manifold $(X,g)$ is positive: $$(-1)^n\chi(X)>0.$$

The major result in \cite{Gr} is that the aforementioned Singer conjecture holds true for
K\"{a}hler hyperbolic manifolds including negatively-curved K\"{a}hler manifolds. In particular, we have
\begin{lemma}[Gromov]\label{nonzero of Euler number}
The signed Euler characteristic of a K\"{a}hler hyperbolic manifold is positive.
\end{lemma}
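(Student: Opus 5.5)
The plan is to deduce Lemma \ref{nonzero of Euler number} from Gromov's vanishing theorem for $L^2$-harmonic forms together with Atiyah's $L^2$-index formula (\ref{Euler number}). Let $(M,g)$ be K\"{a}hler hyperbolic of complex dimension $n$, with universal cover $p:(\widetilde{M},\widetilde{g})\rightarrow(M,g)$ and $p^{\ast}\omega=d\beta$ for a bounded one-form $\beta$. It suffices to prove two things. First, $b^{(2)}_k(M)=0$ for $k\neq n$. Second, $b^{(2)}_n(M)>0$. Then (\ref{Euler number}) collapses to $\chi(M)=(-1)^nb^{(2)}_n(M)$, which is the claimed positivity of $(-1)^n\chi(M)$.

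\textbf{Vanishing off the middle degree.} This is Gromov's key estimate. Let $\alpha$ be an $L^2$-harmonic $k$-form on $\widetilde{M}$ with $k<n$. By the K\"{a}hler identities and hard Lefschetz on the level of forms, $L:=\widetilde{\omega}\wedge\cdot$ commutes with $\Delta$, and $L^{n-k}$ is injective (indeed an isometry up to a constant) on $k$-forms. Hence $\widetilde{\omega}^{n-k}\wedge\alpha$ is a nonzero $L^2$-harmonic $(2n-k)$-form. It is also exact in the $L^2$ sense:
$$\widetilde{\omega}^{n-k}\wedge\alpha=d\big(\beta\wedge\widetilde{\omega}^{n-k-1}\wedge\alpha\big),$$
and the primitive lies in $L^2$ because $\beta$ and $\widetilde{\omega}$ are bounded. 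I would then use a cutoff argument on the complete manifold $\widetilde{M}$ to show that an $L^2$-harmonic form which is $d$ of an $L^2$ form is orthogonal to itself, hence zero. One pairs with cutoffs $\phi_R$ and integrates by parts, and the error term is controlled by $|d\phi_R|\to0$. Thus $\alpha=0$. For $k>n$, Hodge $\ast$ duality reduces to the case $k<n$.

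\textbf{Nonvanishing in the middle degree.} This is the main obstacle. Gromov proves a spectral gap: on $k$-forms with $k\neq n$, $\Delta\geq c>0$ with $c$ depending on $|\beta|_\infty$. This comes from the same estimate made quantitative, namely $\|\alpha\|\lesssim|\beta|_\infty\|\Delta^{1/2}\alpha\|$. I would then apply the $L^2$-index theorem to the twisted Dolbeault operators $\bar\partial+\bar\partial^{\ast}$ with coefficients in $p^{\ast}\mathcal{L}^{m}$. Here $\mathcal{L}$ is the Hermitian line bundle with curvature $\omega$, which exists on $\widetilde{M}$ as a trivial bundle with connection $d+i\beta$. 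The $L^2$-index equals $\int_M\mathrm{Td}(M)e^{m[\omega]}$, a polynomial in $m$ of degree $n$ whose leading coefficient is $\mathrm{vol}(M)>0$. Hence for suitable $m$ the $L^2$-harmonic space in some bidegree is nonzero. The spectral gap argument (Gromov's trick comparing twisted and untwisted operators as $m\to0$ through a continuous family) then transfers nonvanishing to $(p,q)$ with $p+q=n$ for the untwisted Laplacian. This yields $b^{(2)}_n(M)>0$.

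Since this second step is deep and technical, within the paper I would simply cite \cite{Gr} (and \cite{Lu}) for both the vanishing and the nonvanishing. The deduction of positivity from (\ref{Euler number}) is then the one-line computation above.
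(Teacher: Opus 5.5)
Your proposal is correct and takes the same route as the paper. Both arguments feed Gromov's theorem, namely $b^{(2)}_k(M)=0$ for $k\neq n$ and $b^{(2)}_n(M)>0$ (the Singer conjecture for K\"{a}hler hyperbolic manifolds, which the paper cites from \cite{Gr} rather than reproving), into Atiyah's $L^2$-index formula (\ref{Euler number}) to get $(-1)^n\chi(M)=b^{(2)}_n(M)>0$. Your outline of Gromov's argument is accurate, with one imprecision that does not affect the proof you would actually write: the twisting bundle with connection $d+im\beta$ exists only on $\widetilde{M}$, so it is not a pull-back $p^{\ast}\mathcal{L}^{m}$. For that reason $\pi_1(M)$ acts on it only projectively, and Gromov needs a projective version of the $L^2$-index theorem.
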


In contrast to this, Cheeger-Gromov showed the following vanishing result (\cite[Thm 0.2]{CG}).
\begin{lemma}[Cheeger-Gromov]\label{zero of Euler number}
If the fundamental group $\pi_1(X)$ of a smooth manifold $X$ is amenable and infinite, then $b^{(2)}_p(X)=0$ for all $p$. In particular, $\chi(X)=0$.
\end{lemma}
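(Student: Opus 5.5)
This is a deep theorem that the paper cites rather than proves. The plan below reconstructs the argument along the lines of L\"{u}ck's algebraic reformulation of Cheeger-Gromov's analytic proof; it is not a self-contained proof. The central claim in the second paragraph is stated from memory and is the main obstacle.

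The plan is to pass from the analytic $L^2$-Betti numbers of the universal cover $\widetilde{X}$ to an algebraic description, and then to exploit amenability. Write $\Gamma:=\pi_1(X)$ and fix a $\Gamma$-equivariant finite CW structure on $\widetilde{X}$. By Dodziuk's de Rham theorem, $b^{(2)}_p(X)$ equals the von Neumann dimension $\dim_{\mathcal{N}(\Gamma)}$ of the reduced $L^2$-cohomology of the cellular cochain complex $\ell^2(\Gamma)\otimes_{\mathbb{Z}\Gamma}C_\ast(\widetilde{X})$. In L\"{u}ck's extended dimension theory this equals
$$\dim_{\mathcal{N}(\Gamma)}H_p\big(\mathcal{N}(\Gamma)\otimes_{\mathbb{C}\Gamma}C_\ast(\widetilde{X};\mathbb{C})\big).$$
Thus the problem becomes a statement about the group von Neumann algebra $\mathcal{N}(\Gamma)$ as a module over $\mathbb{C}\Gamma$.

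The first key step uses amenability to show that $\mathcal{N}(\Gamma)$ is \emph{dimension-flat} over $\mathbb{C}\Gamma$. Concretely, for every $\mathbb{C}\Gamma$-module $N$,
$$\dim_{\mathcal{N}(\Gamma)}\operatorname{Tor}^{\mathbb{C}\Gamma}_q\big(\mathcal{N}(\Gamma),N\big)=0\qquad\text{for }q\geq1.$$
I would prove this with a F\o lner sequence $F_k\subset\Gamma$. For a finitely presented module $\mathbb{C}\Gamma^m\xrightarrow{A}\mathbb{C}\Gamma^l\to N\to0$, compare the kernel of $A$ acting on $\ell^2(\Gamma)^m$ with the kernels of its truncations to $\mathbb{C}[F_k]^m$. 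Boundary effects are of order $|\partial F_k|/|F_k|\to0$, so the von Neumann dimension of the kernel is the limit of normalized ordinary dimensions. This yields injectivity up to dimension zero, and general modules follow by a colimit argument. With flatness in hand, the universal coefficient spectral sequence collapses in dimension, giving
$$b^{(2)}_p(X)=\dim_{\mathcal{N}(\Gamma)}\Big(\mathcal{N}(\Gamma)\otimes_{\mathbb{C}\Gamma}H_p(\widetilde{X};\mathbb{C})\Big).$$

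The second step, which I expect to be the main obstacle, is to show that the right-hand side vanishes when $\Gamma$ is infinite. In degree $0$ this is direct: $H_0(\widetilde{X};\mathbb{C})=\mathbb{C}$ is the trivial module. The tensor product $\mathcal{N}(\Gamma)\otimes_{\mathbb{C}\Gamma}\mathbb{C}$ has dimension zero because an infinite group has no nonzero $\Gamma$-invariant $\ell^2$-vectors; equivalently, the normalized F\o lner counts $1/|F_k|\to0$. In higher degrees $H_p(\widetilde{X};\mathbb{C})$ need not be finitely generated, so I would argue via the F\o lner approximation rather than by structure theory. The point to establish is that for infinite amenable $\Gamma$ every $\mathbb{C}\Gamma$-module arising here has dimension zero after tensoring, using that $\dim_{\mathcal{N}(\Gamma)}$ is additive and cofinal. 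If that general claim resists, the fallback is Cheeger-Gromov's original analytic route: truncate $L^2$-harmonic forms to F\o lner domains with cut-offs, bound the dimension of the resulting near-harmonic space by $o(|F_k|)$ using finite propagation speed of the heat kernel, and divide by $|F_k|$.

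Finally, Atiyah's $L^2$-index theorem (\ref{Euler number}) gives $\chi(X)=\sum_p(-1)^pb^{(2)}_p(X)=0$.
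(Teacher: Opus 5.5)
Your second step cannot be carried out. The claim it needs is false for $p\geq1$, and so is the lemma in the generality stated (an arbitrary manifold $X$). Take $X$ to be a two-dimensional complex torus blown up at one point, i.e.\ $T^4\sharp\overline{\mathbb{C}P^2}$. Then $\Gamma=\pi_1(X)\cong\mathbb{Z}^4$ is infinite and amenable, but $\chi(X)=1$.

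Concretely, $\widetilde X$ is $\mathbb{C}^2$ blown up at the lattice points. So $H_2(\widetilde X;\mathbb{C})\cong\mathbb{C}\Gamma$, a free module spanned by the exceptional curves, while $H_p(\widetilde X;\mathbb{C})=0$ for $p=1,3,4$. Your own (correct) formula from the first step then gives $b^{(2)}_2(X)=\dim_{\mathcal{N}(\Gamma)}\mathcal{N}(\Gamma)=1\neq0$. Likewise $(S^1\times S^3)\sharp(S^2\times S^2)$ has $\pi_1\cong\mathbb{Z}$ and $\chi=2$.

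The analytic fallback fails at the same point. Over a F\o lner set $F_k$, the cut-off $L^2$-harmonic $2$-forms dual to the exceptional curves span a space of dimension comparable to $|F_k|$, not $o(|F_k|)$. The Cheeger--Gromov estimate works only because restricted closed forms are exact up to an error near the boundary of the F\o lner domain. That exactness is exactly the vanishing of $H^p(\widetilde X)$ in positive degrees.

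The cited Cheeger--Gromov theorem actually concerns the $L^2$-Betti numbers of the group $\Gamma$ itself, i.e.\ of $E\Gamma$. The lemma is therefore true under the extra hypothesis that $X$ is aspherical, or merely that $\widetilde X$ is $\mathbb{C}$-acyclic. In that case your argument is already complete after the first step. Dimension-flatness gives $b^{(2)}_p(X)=\dim_{\mathcal{N}(\Gamma)}\big(\mathcal{N}(\Gamma)\otimes_{\mathbb{C}\Gamma}H_p(\widetilde X;\mathbb{C})\big)$. This vanishes for $p\geq1$ because $H_p(\widetilde X;\mathbb{C})=0$, and for $p=0$ by your F\o lner argument; this is essentially L\"uck's proof.

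This also undermines the use of the lemma in the proof of Theorem \ref{fifth thm}. There $M$ is not known to be aspherical, and the blow-up above is a K\"ahler manifold with the fundamental group of a Calabi--Yau surface and $\chi\neq0$. The theorem itself can be saved by a different route. Write $\widetilde\omega^n=d(\beta\wedge\widetilde\omega^{n-1})$ with $\beta\wedge\widetilde\omega^{n-1}$ bounded; this gives a linear isoperimetric inequality on $\widetilde M$. Hence $\pi_1(M)$ is non-amenable, which directly contradicts $\pi_1(M)\cong\pi_1(M_0)$ being virtually abelian.
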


\subsection{Proof of Theorem \ref{fifth thm}}
Assume that the fundamental group of the K\"{a}hler hyperbolic manifold $(M,\omega)$ is isomorphic to that of a Calabi-Yau manifold $M_0$: $\pi_1(M)\cong\pi_1(M_0)$.

\begin{assertion}\label{last assertion}
The group $\pi_1(M)$ is amenable and infinite.
\end{assertion}
\begin{proof}
By Theorem \ref{BB decomposition}, the fundamental group of some finite cover of $M_0$ is free abelian. So $\pi_1(M_0)$ contains a free abelian subgroup of finite index. This means $\pi_1(M_0)$ and hence $\pi_1(M)$ are virtually abelian and hence amenable.

Let $\omega$ be the K\"{a}hler form of $g$ and $$(\widetilde{M},\widetilde{\omega})
\overset{p}{\longrightarrow}(M,\omega)$$
be the (K\"{a}hler) universal covering of $(M,\omega)$ with $\widetilde{\omega}:=p^{\ast}(\omega)$.

If $\pi_1(M)$ were finite, $(\widetilde{M},\widetilde{\omega})$ would be a \emph{compact} K\"{a}hler manifold. However, K\"{a}hler hyperbolicity of $(M,\omega)$ yields that $\widetilde{\omega}=d\beta$ for some one-form $\beta$ on $\widetilde{M}$. Consequently, compactness of $\widetilde{M}$ and Stokes' theorem yield
$$\int_{\widetilde{M}}(\widetilde{\omega})^n=
\int_{\widetilde{M}}(d\beta)^n=\int_{\widetilde{M}}
d\big(\beta\wedge(d\beta)^{n-1}\big)=0,$$
a contradiction. This means $\pi_1(M)$ is infinite.
\end{proof}

Lemma $\ref{nonzero of Euler number}$ gives $\chi(M)\neq0$, whereas Assertion \ref{last assertion} and Lemma \ref{zero of Euler number} give $\chi(M)=0$. This contradiction proves Theorem \ref{fifth thm}.

\section{Sign of scalar curvature on Calabi-Yau manifolds}\label{scalar curvature}
By the Calabi-Yau theorem, every Calabi-Yau manifold admits a Ricci-flat K\"{a}hler metric. The Kazdan-Warner scalar-curvature prescription theorem (\cite{KW}) also implies that every Calabi-Yau manifold of complex dimension $n\geq2$ admits a Riemannian metric with constant \emph{negative} scalar curvature.

We briefly contrast this with the existence or nonexistence of Riemannian metrics with \emph{positive scalar curvature} (\emph{PSC} for short). The answer depends on the dimension.
\begin{proposition}
No Calabi-Yau surface admits a Riemannian metric of positive scalar curvature.
\end{proposition}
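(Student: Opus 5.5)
We argue by contradiction, passing to a finite cover and using Seiberg-Witten theory where $\hat{A}$ alone gives no obstruction. Suppose a Calabi-Yau surface $M_0$ carries a Riemannian metric $h$ of positive scalar curvature. By Theorem \ref{BB decomposition}, $M_0$ has a finite \'{e}tale cover $\widetilde{M}_0$ that is either a complex torus $T^4$ or a K3 surface. A dimension-two SCY factor is excluded by Definition \ref{def}, which requires $n\geq3$, and two-dimensional hyperK\"{a}hler manifolds are exactly K3 surfaces. The pulled-back metric on $\widetilde{M}_0$ again has positive scalar curvature. It therefore suffices to rule out PSC on $T^4$ and on K3.

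For K3 the argument is the $\hat{A}$-genus. A K3 surface is simply connected with $c_1=0$ integrally, so $w_2=0$ and it is spin, exactly as in the proof of Assertion \ref{second assertion}. By Lemma \ref{equal lemma} and Lemma \ref{Todd genus of SCY and hyperKahler}, $\hat{A}(\mathrm{K3})=\mathrm{Td}(\mathrm{K3})=2\neq0$. Theorem \ref{Lich-AH} then forbids any metric of positive scalar curvature.

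For the torus, $\hat{A}(T^4)=0$, so Lichnerowicz gives nothing, and this is the main obstacle. I would invoke the Schoen-Yau / Gromov-Lawson theorem that $T^n$ admits no PSC metric. Alternatively, one can use Seiberg-Witten theory: $T^4$ is a K\"{a}hler surface with $b_2^+=3>1$, so its canonical class has nonzero Seiberg-Witten invariant. The Weitzenb\"{o}ck formula shows that every Seiberg-Witten invariant vanishes in the presence of a PSC metric when $b_2^+>1$. Gromov-Lawson's enlargeability argument also works, since $T^4$ is enlargeable and spin.

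The Seiberg-Witten route actually handles both cases at once. Every Calabi-Yau surface is K\"{a}hler. Its finite cover is K3 or $T^4$, both with $b_2^+=3>1$, and the canonical Seiberg-Witten invariant of such a K\"{a}hler surface is $\pm1$ by Witten/Taubes. This contradicts the existence of a PSC metric, which completes the contradiction and the proof.
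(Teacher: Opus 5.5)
Your proof is correct and follows the paper's argument. Like the paper, you use the Bogomolov–Beauville decomposition to pass to a finite cover that is $T^4$ or a K3 surface, pull back the PSC metric, and then rule out the torus by Schoen–Yau/Gromov–Lawson and K3 by spinness, $\hat{A}(\mathrm{K3})=2$ and Lichnerowicz. Your extra Seiberg–Witten remark, using $b_2^+=3>1$ on both covers, is a valid uniform alternative but is not needed; passing to the cover matters there, since Enriques and bielliptic surfaces have $b_2^+=1$.
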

\begin{proof}
Let $S_0$ be a Calabi-Yau surface.
By Theorem \ref{BB decomposition}, there exists a finite \'{e}tale cover $$\widetilde{S_0}\longrightarrow S_0$$
such that $\widetilde{S_0}$ is either a complex torus $T$ or a $K3$ surface.

Neither case admits a PSC metric. For a torus this follows from the results of Schoen-Yau (\cite{SY}) and Gromov-Lawson (\cite{GL}). A $K3$ surface is spin and satisfies $\hat{A}(K3)=2\neq0$, so the nonexistence follows from Theorem \ref{Lich-AH}.

If $S_0$ admitted a PSC metric, its pullback to $\widetilde{S_0}$ would again have a PSC metric, a contradiction.
\end{proof}

When $n\geq3$, we have
\begin{example}
Let $E$ be an elliptic curve and $X(5)$ a smooth quintic threefold in $\mathbb{P}^4$. Set

\begin{eqnarray}
M_1:=\underbrace{E\times\cdots\times E}_{\text{$n$ copies}},\quad
M_2:=\left\{ \begin{array}{ll}
X(5),\qquad n=3,\\
~\\
X(5)\times \underbrace{E\times\cdots\times E}_{\text{$n-3$ copies}},\qquad n\geq4,
\end{array} \right.\nonumber
\end{eqnarray}
Both $M_1$ and $M_2$ are $n$-dimensional Calabi-Yau manifolds. The first admits no PSC metric, whereas the second does.

The smooth manifold $M_1$ is a torus and hence enlargeable, so the Gromov-Lawson theorem rules out a PSC metric (cf. \cite[Thm 5.5]{LM}).

The quintic $X(5)$ is a simply connected real six-dimensional manifold and admits a PSC metric (\cite[Cor.4.6]{LM}). Denote such a metric by $g_+$. Let $g_0$ be a flat metric on $E$. Then the product metric
$$g=g_+\oplus\underbrace{g_0\oplus\cdots\oplus g_0}_{\text{$n-3$ copies}}$$
is a PSC metric on $M_2$.
\end{example}

\section{Appendix}\label{appendix}
For completeness, we record the elementary low-dimensional obstruction mentioned in the Introduction.

\begin{proposition}\label{positivesurface}
Let $S_+$ be a Fano surface and $S_0$ a Calabi-Yau surface. Then $S_+$ is not homotopy equivalent to $S_0$.
\end{proposition}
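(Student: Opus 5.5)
We argue by contradiction: suppose $f\colon S_+\to S_0$ is a homotopy equivalence, and compare homotopy invariants of the two surfaces. A Fano surface is rational, hence simply connected, with $b_1=0$, $\chi(S_+,\mathcal{O})=1$ and $p_g=0$. It is either $\mathbb{C}P^1\times\mathbb{C}P^1$ or a blow-up of $\mathbb{C}P^2$ at $k\le 8$ points. In every case $b_2^+(S_+)=1$ and
$$\chi_{\mathrm{top}}(S_+)=e(S_+)\in\{3,4,\dots,11\}.$$
Since $f$ is a homotopy equivalence, it induces an isomorphism $\pi_1(S_0)\cong\pi_1(S_+)=0$. So $S_0$ is simply connected.

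By Theorem \ref{BB decomposition}, a simply connected Calabi-Yau surface is its own universal cover. A complex torus is not simply connected, so $S_0$ must be a $K3$ surface. Alternatively, one can argue directly from the classification: the Calabi-Yau surfaces are tori, $K3$, Enriques and bielliptic surfaces, and only $K3$ is simply connected.

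For a $K3$ surface the Hodge numbers give $b_2=22$, so $e(S_0)=24$. On the other hand $e(S_+)\le 11$. The Euler characteristic is a homotopy invariant, since it is computed from the Betti numbers. This is a contradiction.

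As an independent check, $b_2^+(K3)=3$ differs from $b_2^+(S_+)=1$. Since $f$ is a homotopy equivalence, it carries the intersection form to $\pm$ the intersection form. This only flips $b^+$ and $b^-$, so we would need $\{1,b_2-1\}=\{3,19\}$, which fails.

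There is no substantial obstacle. The only step requiring care is the reduction to the $K3$ case via simple connectivity. Depending on the reader, this should be justified by citing the classification of Fano (del Pezzo) surfaces, which gives both their simple connectivity and their Betti numbers, rather than by invoking the rational connectedness results used earlier.
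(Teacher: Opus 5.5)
Your proposal is correct and follows essentially the same route as the paper: the homotopy equivalence forces $\pi_1(S_0)=0$, which reduces $S_0$ to a $K3$ surface (the paper uses the Enriques--Kodaira list, your alternative), and a numerical homotopy invariant then gives the contradiction. The paper compares $b_2(S_+)\le 9$ with $b_2(K3)=22$ instead of Euler characteristics, which is equivalent here because both surfaces have vanishing odd Betti numbers.
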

\begin{proof}
By the classification of Fano surfaces, $S_+$ is one of
\be\label{1}\mathbb{P}^2,\quad\mathbb{P}^1\times\mathbb{P}^1,\qquad
\text{Bl}_{p_1,\ldots,p_r}\mathbb{P}^2,~1\leq r\leq 8,\ee
with the points satisfying the usual generic conditions. The list (\ref{1}) implies that
\be\label{2}b_2(S_+)\leq9.\ee

The Calabi-Yau surface $S_0$ is automatically minimal. By the Enriques-Kodaira classification,
$S_0$ is one of the four types of surfaces:
\be\label{3}K3,\quad\text{Enriques},\quad\text{complex torus},\quad\text{bielliptic surface}.\ee

If $S_0$ were homotopy equivalent to $S_+$, then $$\pi_1(S_0)\cong\pi_1(S_+)=0.$$
This immediately excludes Enriques surfaces, complex tori, and bielliptic surfaces from (\ref{3}). Therefore
$S_0$ would have to be a $K3$ surface. But a $K3$ surface has $b_2=22$. This contradicts (\ref{2}).
\end{proof}

\begin{proposition}\label{negativesurface}
Let $S_-$ be a canonically polarized surface and $S_0$ a Calabi-Yau surface. Then $S_-$ is not homotopy equivalent to $S_0$.
\end{proposition}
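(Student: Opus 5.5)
We argue by contradiction, as in Proposition \ref{positivesurface}: suppose $S_-$ is homotopy equivalent to $S_0$, and compare homotopy invariants that distinguish the two sides. The invariants we use are the fundamental group, the Euler characteristic, the signature, and the Betti numbers. The Chern numbers of $S_0$ are determined by these, and for $S_-$ they are constrained by Noether's formula and the Bogomolov-Miyaoka-Yau inequality.

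First, by the classification list (\ref{3}), $S_0$ is a $K3$ surface, an Enriques surface, a complex torus, or a bielliptic surface. In every case $c_1^2(S_0)=0$ in rational cohomology. The Euler characteristics are $c_2=24,12,0,0$ respectively. The signatures are $\sigma=-16,-8,0,0$. All of these are homotopy invariants, given the usual care with orientation: we may compose with an orientation reversal if needed, so the signature is determined up to sign.

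For $S_-$, the bundle $K_{S_-}$ is ample, so $S_-$ is minimal of general type. Hence $c_1^2(S_-)=K^2>0$ and $c_2(S_-)>0$. Moreover $c_1^2=2\chi_{top}+3\sigma$ expresses $K^2$ through homotopy invariants up to the sign of $\sigma$. The two cases are as follows.

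\emph{Case A: $S_-$ is homotopy equivalent to a torus or a bielliptic surface.} Then $\chi_{top}(S_-)=0$. This contradicts $c_2(S_-)>0$, which follows from BMY or from Noether's formula together with $\chi(\mathcal{O})>0$ for minimal surfaces of general type.

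\emph{Case B: $S_-$ is homotopy equivalent to a $K3$ or an Enriques surface.} Then $\chi_{top}=24$ or $12$ and $\sigma=\pm16$ or $\pm8$.
\begin{itemize}
\item If the homotopy equivalence preserves orientation, then $K^2=2\cdot24-48=0$, respectively $24-24=0$, contradicting $K^2>0$.
\item If it reverses orientation, then $\sigma(S_-)=+16$ or $+8$. This gives $K^2=96$ or $48$, while BMY requires $K^2\le 3c_2=72$ or $36$, again a contradiction.
\end{itemize}
Alternatively, $\sigma>0$ is already impossible here, since $K^2=2c_2+3\sigma>2c_2$ would need $K^2\le3c_2$, i.e.\ $\sigma\le c_2/3$. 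Indeed $16>8$ and $8>4$, so this version of the argument also closes the case.

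The main obstacle is the orientation issue, since a homotopy equivalence need not preserve orientation. It is handled by checking both signs of $\sigma$ as above, with BMY disposing of the reversed case.
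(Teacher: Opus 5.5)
Your proof is correct and is essentially the paper's argument: homotopy invariance of $c_2$ and of $\sigma$ up to sign, combined with $c_1^2=2c_2+3\sigma$, forces $K_{S_-}^2=0$ when the equivalence preserves orientation, and violates Bogomolov--Miyaoka--Yau when it reverses orientation. The only difference is that you plug in values from the Enriques--Kodaira list, whereas the paper uses only $c_1^2(S_0)=0$ to obtain $c_1^2(S_-)=4c_2(S_-)$ in the reversed case, so your Case A and the classification are unnecessary; also, your alternative justification of $c_2(S_-)>0$ via Noether's formula and $\chi(\mathcal{O})>0$ does not work on its own, since it only gives $c_2>-K^2$, but BMY already supplies this.
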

\begin{proof}
Let $\sigma(S)$ denote the signature of a complex surface $S$. The Hirzebruch signature formula in this case reads
\be\label{eq:surface}
3\sigma(S)=c_1^2(S)-2c_2(S).
\ee

Assume for contradiction that there is a homotopy equivalence $$f:~S_{-}\overset{\simeq}{\longrightarrow}S_0$$

\subsubsection*{Orientation-preserving case}

If \(f\) preserves the orientations, then
$$c_2(S_{-})=c_2(S_0),\quad \sigma(S_{-})=\sigma(S_0).$$ Using (\ref{eq:surface}) we obtain $c_1^2(S_{-})=c_1^2(S_0),$ which is impossible because $c_1^2(S_{-})>0$ whereas $c_1^2(S_0)=0$.

\subsubsection*{Orientation-reversing case}

If \(f\) reverses the orientations, then
\be\label{4}c_2(S_{-})=c_2(S_0),\quad \sigma(S_{-})=-\sigma(S_0).\ee
Using (\ref{eq:surface}) we obtain
\be\label{5}3\sigma(S_{-})=c_1^2(S_{-})-2c_2(S_{-}),\quad
3\sigma(S_{0})=-2c_2(S_{0}).\ee
Putting (\ref{4}) and (\ref{5}) together yields
\be c_1^2(S_{-})=4c_2(S_-).\nonumber\ee

On the other hand, the Miyaoka-Yau inequality gives
$$c_1^2(S_-)\leq 3c_2(S_-).$$
Combining the last two identities yields
\[
4c_2(S_-)\leq 3c_2(S_-).
\]
Moreover \(c_1^2(S_-)>0\), so \(c_2(S_-)>0\).  This is impossible.
\end{proof}

\end{document}